\documentclass[10pt]{article}

\title{Lifting Cubic Realizations of Weak Orders in Types A and B}
\author{Daria Poliakova}
\date{}

\usepackage[a4paper,margin=2.8cm]{geometry}
\usepackage{amsmath,amssymb,amsthm}
\usepackage{mathtools}
\usepackage{enumitem}
\usepackage[hidelinks]{hyperref}

\usepackage{tikz}
\usetikzlibrary{arrows.meta,calc,positioning}

\setlist[itemize]{leftmargin=2em}
\setlist[enumerate]{leftmargin=2.2em}

\usepackage{accents}
\newlength{\dhatheight}

\newtheorem{theorem}{Theorem}[section]
\newtheorem{proposition}[theorem]{Proposition}
\newtheorem{lemma}[theorem]{Lemma}
\newtheorem{corollary}[theorem]{Corollary}

\theoremstyle{definition}
\newtheorem{definition}[theorem]{Definition}
\newtheorem{remark}[theorem]{Remark}

\newcommand{\Inv}{\mathrm{Inv}}

\begin{document}

\maketitle

\begin{abstract}
We study cubic realizations of posets compatible with projection maps, meaning that the projection is represented by deletion of the last coordinate. For cylindrical projections, we introduce the pre-Reeb graph and the augmented pre-Reeb graph, which control compatible cubic lifts and compatible order-embedding cubic lifts, respectively. We apply this construction to the deletion towers in weak order of types \(A\) and \(B\). The pre-Reeb graphs are the \(1\)-skeleta of, respectively, cubes and certain zonotopes. In both cases, the augmented pre-Reeb graphs have reachability posets that are total orders, yielding combinatorial uniqueness of the compatible order-embedding cubic coordinates.
\end{abstract}

\section*{Introduction}

Many families of combinatorial polytopes admit embeddings of their vertex sets into integer cubes. Two standard examples are bracket vectors for associahedra and Lehmer codes for permutahedra. In the associahedral case, bracket vectors give an order embedding of the Tamari lattice \cite{HuangTamari1972}. In the permutahedral case, Lehmer codes give a cubical embedding of the weak order, but not an order embedding of it. However, the order dimension of the weak order is $n-1$ as proved in \cite{Flath1993MultinomialLattices}, and indeed the inversions counted to obtain Lehmer codes can be weighted exponentially, to obtain an order embedding -- this version has appeared, for example, in \cite{Reading2003}, and in yet unpublished work of Jonah Berggren on cubic coordinates for framing lattices in general. People have considered related cubic coordinate systems for other combinatorial objects, including Tamari intervals, permutrees, and Hochschild polytopes \cite{Combe2023,Jimenez2023,PilaudPoliakova2025}.

Many such families also come with natural projection maps between consecutive ranks. In this paper, cubic realizations of $P$ and $Q$ are called compatible with a projection $\pi: P \to Q$ if the projection is represented on coordinates by deletion of the last coordinate. This leads to the following questions. Given a projection \(\pi:P\to Q\) and a cubic realization of \(Q\), when does there exist a compatible cubic realization of \(P\)? If the realization of \(Q\) is an order embedding, when does there exist a compatible lift which is again an order embedding? We study these questions for cylindrical projections, i.e. projections whose fibers are total orders, such that the bottom and top elements of the fibers form two horizontal copies of the Hasse diagram of the base.

For a cylindrical projection \(\pi:P\to Q\), we introduce two directed graphs: the pre-Reeb graph \(R(\pi)\) and the augmented pre-Reeb graph \(\widehat R(\pi)\). The pre-Reeb graph controls compatible cubic lifts. The augmented pre-Reeb graph controls compatible order-embedding cubic lifts. More precisely, if \(R(\pi)\) is acyclic, then any cubic realization of \(Q\) extends to a cubic realization of \(P\) by one new coordinate. If \(\widehat R(\pi)\) is acyclic, then any order-embedding cubic realization of \(Q\) extends to an order-embedding cubic realization of \(P\) by one new coordinate. 

Recall that the Dushnik--Miller dimension of a poset $P$ is minimal $d$ such that there exists an order embedding $P \hookrightarrow \mathbb{R}^d$ \cite{DushnikMiller1941}. For a cylindrical tower over a point, 
\[
\mathrm{pt} = P_0 \xleftarrow{\pi_1} P_1 \xleftarrow{\pi_2} P_2 \xleftarrow{} \cdots ,
\]
such that all augmented pre-Reeb graphs are acyclic, we get $\dim P_n = n$.

We apply our constructions to the deletion towers in weak order of types \(A\) and \(B\),
\[
\mathrm{pt} = S_1 \xleftarrow{} S_2 \xleftarrow{} \cdots,
\qquad
\mathrm{pt} \xleftarrow{} W_1 \xleftarrow{} W_2 \xleftarrow{} \cdots.
\]
Pre-Reeb graphs in type \(A\) are skeleta of cubes, or Hasse diagrams of Boolean lattices. Pre-Reeb graphs in type \(B\) are skeleta of certain zonotopes; this description appears to be new and unexpected. For augmented pre-Reeb graphs, both types yield total orders as reachability posets. It follows that the compatible order-embedding coordinates in these towers are combinatorially unique.

The resulting cubic coordinates for both types appear already in \cite{Reading2003}, where Reading proves that the weak orders of types \(A\) and \(B\) have order dimension equal to the rank. The novel input of the current paper is the lifting formalism via pre-Reeb and augmented pre-Reeb graphs, the explicit computation of these structures for the deletion towers in types A and B, and the resulting uniqueness statements for projection-compatible coordinates. An intended direction for further work is to apply the current formalism to permutoassociahedra \cite{Kapranov1993} and to higher categorical associahedra \cite{Bottman2019, BackmanBottmanPoliakova2024}.

\subsection*{Acknowledgements}

The ideas leading to this paper arose during the CIRM event \emph{Beyond Permutahedra and Associahedra}. I benefited from discussions with Sergey Arkhipov, Spencer Backman, Jonah Berggren, C{\'e}sar Ceballos, Hung Hoang, Vincent Pilaud -- and ChatGPT. I am supported by the Collaborative Research Center CRC 1624 \emph{Higher Structures, Moduli Spaces and Integrability} at the University of Hamburg.

\section{Setup}

\subsection{Cubic realizations and (augmented) pre-Reeb graphs}

augmented
\begin{definition}
A \emph{cubic realization} of a poset \(P\) is an embedding \(c:P\to \mathbb R^d\) of its Hasse diagram such that for every cover \(x\to y\), the points \(c(x)\) and \(c(y)\) differ in exactly one coordinate, and in that coordinate one has \(c(x)<c(y)\).
An \emph{order-embedding cubic realization} is a cubic realization such that
\[
x\le y \quad\Longleftrightarrow\quad c(x)\le c(y)
\]
coordinatewise.
\end{definition}

\begin{definition}
Let \(P,Q\) be posets and \(\pi:P\to Q\) a map. We say that \(\pi\) is a \emph{cylindrical projection} if:
\begin{enumerate}[label=\arabic*)]
    \item for every cover \(v\to w\) in \(P\), either \(\pi(v)=\pi(w)\) or \(\pi(v)\to\pi(w)\) is a cover in \(Q\);
    \item for every \(q\in Q\), the fiber \(\pi^{-1}(q)\subset P\) is a total order with at least 2 elements;
\item if \(b(q)\) and \(t(q)\) denote respectively the minimal and maximal elements of \(\pi^{-1}(q)\), then the maps
    \[
    b,t:Q\to P
    \]
    identify the Hasse diagram of \(Q\) with induced subgraphs of the Hasse diagram of \(P\). Equivalently, for every cover \(q\to q'\) in \(Q\), the relations
    \[
    b(q)\to b(q'),\qquad t(q)\to t(q')
    \]
    are covers in \(P\).
\end{enumerate}
\end{definition}

In most of the constructions and propositions of this paper, just the conditions \(1)\) and \(2)\) in the definition of cylindrical projection are used. Condition \(3)\) is only used to obtain the equality in the dimension statement for towers in Proposition \ref{lem:dimension-cylindrical}. It is however satisfied in all the examples of interest; we thus keep it for aesthetic reasons.

\begin{definition}
Let \(\pi:P\to Q\) be a projection, let \(c_Q:Q\to\mathbb R^d\) and \(c_P:P\to\mathbb R^{d+1}\) be cubic realizations. We say that \(c_P\) is \emph{compatible} with \(c_Q\) and \(\pi\) if
\[
c_P(x)=(c_Q(\pi(x)),h(x))
\]
for some function \(h:P\to\mathbb R\).
\end{definition}

\begin{definition}
Let \(\pi:P\to Q\) be a cylindrical projection. A cover \(v\to w\) in \(P\) is
\begin{itemize}
    \item \emph{horizontal} if \(\pi(v)\to\pi(w)\) is a cover in \(Q\);
    \item \emph{vertical} if \(\pi(v)=\pi(w)\).
\end{itemize}
Let \(\sim_h\) be the equivalence relation on \(P\) generated by undirected adjacency along horizontal covers.
\end{definition}

\begin{definition}
Let \(\pi:P\to Q\) be a cylindrical projection.
The \emph{pre-Reeb graph} \(R(\pi)\) is the directed graph whose vertices are the \(\sim_h\)-equivalence classes \([v]\), with an edge
\[
[v]\to[w]
\]
whenever there exist representatives \(v'\in[v]\), \(w'\in[w]\) such that \(v'\to w'\) is a vertical cover in \(P\).
\end{definition}

\begin{definition}\label{def:auxiliary-edges}
Let \(\pi:P\to Q\) be a cylindrical projection.
The \emph{augmented pre-Reeb graph} \(\widehat{R}(\pi)\) is obtained from \(R(\pi)\) by adding an auxiliary edge \([v]\to[w]\) whenever there exist representatives \(v'\in[v]\), \(w'\in[w]\) such that
\[
\pi(w')<\pi(v')\ \text{ in }Q,
\qquad
v' \text{ and } w' \text{ are incomparable in }P.
\]
\end{definition}

\begin{definition}
Let \(G\) be an acyclic directed graph. Its \emph{reachability poset} is the poset on \(V(G)\) given by
\[
x\le_G y \quad\Longleftrightarrow\quad \text{there exists a directed path }x\rightsquigarrow y\text{ in }G.
\]
\end{definition}

\begin{definition}
Assume \(R(\pi)\) is acyclic. The \emph{Reeb poset} of \(\pi\) is the reachability poset of \(R(\pi)\). We denote it by \(\mathsf{Reeb}(\pi)\).
\end{definition}

\begin{proposition}\label{prop:cubic-extension}
Let \(c_Q:Q\to\mathbb R^d\) be a cubic realization. Then compatible cubic realizations
\[
c_P:P\to\mathbb R^{d+1}
\]
are in bijective correspondence with functions
\[
h:V(R(\pi))\to\mathbb R
\]
strictly increasing along edges.
In particular, such a compatible cubic realization exists if and only if \(R(\pi)\) is acyclic.
\end{proposition}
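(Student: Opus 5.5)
The plan is to make the correspondence explicit and check both directions against the definition of a cubic realization. Given a compatible cubic realization \(c_P(x)=(c_Q(\pi(x)),h(x))\), I will first show that \(h\) is constant on \(\sim_h\)-classes. Take a horizontal cover \(v\to w\). Then \(\pi(v)\to\pi(w)\) is a cover in \(Q\), and since \(c_Q\) is cubic, \(c_Q(\pi(v))\) and \(c_Q(\pi(w))\) already differ in exactly one of the first \(d\) coordinates. Because \(c_P\) is cubic, \(c_P(v)\) and \(c_P(w)\) differ in exactly one coordinate overall, so the last coordinates must agree: \(h(v)=h(w)\). Hence \(h\) descends to a function \(\bar h\) on \(V(R(\pi))\).

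Next I will show that \(\bar h\) is strictly increasing along edges. For a vertical cover \(v\to w\) we have \(\pi(v)=\pi(w)\), so the first \(d\) coordinates coincide. The cover must therefore change exactly the last coordinate, and upward, giving \(h(v)<h(w)\). Since every edge \([v]\to[w]\) of \(R(\pi)\) comes from such a vertical cover between representatives, this gives \(\bar h([v])<\bar h([w])\).

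Conversely, given \(\bar h\) strictly increasing along edges, I will define \(c_P(x)=(c_Q(\pi(x)),\bar h([x]))\) and verify the cover conditions. By condition 1) of a cylindrical projection, every cover of \(P\) is either horizontal or vertical.
\begin{itemize}
\item For a horizontal cover, the last coordinates agree and exactly one earlier coordinate increases, because \(c_Q\) is cubic.
\item For a vertical cover, only the last coordinate changes, and it increases by the edge condition.
\end{itemize}
Injectivity also needs checking. If \(c_P(x)=c_P(y)\), then \(c_Q(\pi(x))=c_Q(\pi(y))\), so \(\pi(x)=\pi(y)\) because \(c_Q\) is an embedding. By condition 2), \(x\) and \(y\) lie in a totally ordered fiber; if \(x<y\), the chain of vertical covers from \(x\) to \(y\) strictly increases \(h\), so \(h(x)\ne h(y)\). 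I expect this chain step to be the one point needing care. I will argue that covers within the fiber are covers in \(P\): any \(z\) with \(x<z<y\) in \(P\) satisfies \(\pi(x)\le\pi(z)\le\pi(y)\), because \(\pi\) is order-preserving by condition 1) applied along a maximal chain. This forces \(\pi(z)=\pi(x)\), so \(z\) lies in the fiber. I will also note that this embedding is of the Hasse diagram, so edges map to axis-parallel segments; this is automatic from the cover conditions.

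The two constructions are mutually inverse by construction, which gives the bijection. For the final claim, a strictly increasing function on the vertices forbids directed cycles. Conversely, if \(R(\pi)\) is acyclic, I will take \(\bar h\) to be, for example, a linear extension index of the reachability poset, which gives existence.
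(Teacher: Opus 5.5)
Your proposal is correct and follows the paper's proof step by step: you descend \(h\) to horizontal classes via horizontal covers, get strict increase along vertical covers, and for the converse define \(c_P(x)=(c_Q(\pi(x)),h([x]))\) with injectivity coming from the totally ordered fibers. Your extra check that every element between two points of a fiber lies in that fiber (using monotonicity of \(\pi\) from condition 1)) spells out a detail that the paper's injectivity argument leaves implicit.
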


\begin{proof}
A compatible cubic realization has the form
\[
c_P(x)=(c_Q(\pi(x)),h_P(x))
\]
for some \(h_P:P\to\mathbb R\). Along a horizontal cover, the first \(d\) coordinates already change, so the last one does not; hence \(h_P\) is constant on horizontal classes. Thus it descends to a function
\[
h:V(R(\pi))\to\mathbb R.
\]
Along a vertical cover, the first \(d\) coordinates are equal, so the last one must strictly increase. Hence \(h\) is strictly increasing along edges.

Conversely, let \(h:V(R(\pi))\to\mathbb R\) be strictly increasing along edges, and define
\[
c_P(x):=(c_Q(\pi(x)),h([x])).
\]
Horizontal covers change only the first \(d\) coordinates, and vertical covers only the last one, so \(c_P\) is a cubic realization. It is injective: if \(c_P(x)=c_P(y)\), then \(\pi(x)=\pi(y)\) and \(h([x])=h([y])\). If \(x\neq y\), the fiber is a total order, so one gets a nontrivial vertical chain from one to the other, hence a directed path in \(R(\pi)\) from \([x]\) to \([y]\), contradicting strict increase of \(h\). Thus \(x=y\).

The two constructions are inverse to each other. Finally, such an \(h\) exists if and only if \(R(\pi)\) is acyclic.
\end{proof}

Thus, if \(h\) is any increasing function on \(\mathsf{Reeb}(\pi)\), then the function \(x\mapsto h([x])\) on \(P\) has \(R(\pi)\) as its Reeb graph; this explains the terminology.

\begin{definition}
Assume \(\widehat{R}(\pi)\) is acyclic. The \emph{augmented Reeb poset} of \(\pi\) is the reachability poset of \(\widehat{R}(\pi)\). We denote it by \(\widehat{\mathsf{Reeb}}(\pi)\).
\end{definition}

\begin{proposition}\label{prop:good-cubic-extension}
Let \(c_Q:Q\to\mathbb R^d\) be an order-embedding cubic realization. Then compatible order-embedding cubic realizations
\[
c_P:P\to\mathbb R^{d+1}
\]
are in bijective correspondence with functions
\[
h:V(\widehat{R}(\pi))\to\mathbb R
\]
strictly increasing along edges.
In particular, such a compatible order-embedding cubic realization exists if and only if \(\widehat{R}(\pi)\) is acyclic.
\end{proposition}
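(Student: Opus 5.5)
The plan is to reduce everything to Proposition~\ref{prop:cubic-extension}. That result already gives a bijection between compatible cubic realizations \(c_P(x)=(c_Q(\pi(x)),h([x]))\) and functions \(h\) on \(V(R(\pi))=V(\widehat R(\pi))\) that are strictly increasing along the edges of \(R(\pi)\). So it suffices to show the following: such a \(c_P\) is an order embedding if and only if \(h\) is also strictly increasing along the auxiliary edges of Definition~\ref{def:auxiliary-edges}. The final existence statement then follows exactly as before, since a function strictly increasing along all edges exists precisely when \(\widehat R(\pi)\) is acyclic.

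First I record two easy facts.
\begin{enumerate}
\item Every cubic realization satisfies \(x\le y\Rightarrow c(x)\le c(y)\). Indeed, along a cover exactly one coordinate strictly increases and the rest are unchanged, so this follows by chaining covers. Hence only the reverse implication \(c_P(x)\le c_P(y)\Rightarrow x\le y\) is at stake.
\item \(\pi\) is order preserving. By condition 1) of a cylindrical projection, each cover of \(P\) maps to a cover or to an equality in \(Q\), and one chains these.
\end{enumerate}
Since \(c_Q\) is an order embedding, \(c_P(x)\le c_P(y)\) is equivalent to having both \(\pi(x)\le\pi(y)\) and \(h([x])\le h([y])\).

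\emph{Necessity.} Suppose \(c_P\) is an order embedding, and let \([v]\to[w]\) be an auxiliary edge witnessed by \(v',w'\) with \(\pi(w')<\pi(v')\) and \(v',w'\) incomparable. If \(h([w])\ge h([v])\), then \(c_P(w')\le c_P(v')\) coordinatewise. This forces \(w'\le v'\), contradicting incomparability. Hence \(h([v])<h([w])\). Note that strictness is genuinely needed here, since equality already gives \(c_P(w')\le c_P(v')\).

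\emph{Sufficiency.} Assume \(h\) is strictly increasing on all edges of \(\widehat R(\pi)\). Suppose \(c_P(x)\le c_P(y)\) but \(x\not\le y\); I split into cases.
\begin{itemize}
\item If \(\pi(x)=\pi(y)\), the fiber is a total order, so \(y<x\). The vertical chain from \(y\) to \(x\) then gives a nontrivial directed path \([y]\rightsquigarrow[x]\) in \(R(\pi)\), so \(h([y])<h([x])\), which is a contradiction.
\item If \(\pi(x)<\pi(y)\) and \(y<x\), then monotonicity of \(\pi\) gives \(\pi(y)\le\pi(x)\), which is impossible.
\item If \(\pi(x)<\pi(y)\) and \(x,y\) are incomparable, then taking \(v'=y\), \(w'=x\) gives an auxiliary edge \([y]\to[x]\). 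Hence \(h([y])<h([x])\), again a contradiction.
\end{itemize}

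There is no serious obstacle. The only points requiring care are the orientation convention for auxiliary edges (the edge goes from the element lying over the larger base point) and the two preliminary facts, forward monotonicity of cubic realizations and monotonicity of \(\pi\).
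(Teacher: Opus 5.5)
Your proof is correct and follows the paper's argument: you reduce to Proposition~\ref{prop:cubic-extension}, get strictness on auxiliary edges by the same contradiction, and for the converse use that $c_P(x)\le c_P(y)$ forces $\pi(x)\le\pi(y)$ and yields an auxiliary edge $[y]\to[x]$ when $x,y$ are incomparable. Your explicit treatment of the case $y<x$ also covers a step the paper leaves implicit. One slip to fix: in the necessity step the hypothesis should read $h([w])\le h([v])$, not $h([w])\ge h([v])$, since that is what gives $c_P(w')\le c_P(v')$.
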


\begin{proof}
If \(c_P\) is a compatible order-embedding cubic realization, Proposition~\ref{prop:cubic-extension} gives a function
\[
h:V(R(\pi))\to\mathbb R
\]
strictly increasing along edges of \(R(\pi)\). Let \([x]\to[y]\) be an auxiliary edge of \(\widehat{R}(\pi)\), with witnesses \(x'\in[x]\), \(y'\in[y]\) such that \(\pi(y')<\pi(x')\) and \(x',y'\) are incomparable. Since \(c_Q\) is order-embedding, \(c_Q(\pi(y'))\le c_Q(\pi(x'))\). If \(h([y])\le h([x])\), then \(c_P(y')\le c_P(x')\), hence \(y'\le x'\), contradiction. Thus \(h([x])<h([y])\). So \(h\) is strictly increasing along edges of \(\widehat{R}(\pi)\).

Conversely, let
\[
h:V(\widehat{R}(\pi))\to\mathbb R
\]
be strictly increasing along edges. Since \(R(\pi)\) is a subgraph of \(\widehat{R}(\pi)\), Proposition~\ref{prop:cubic-extension} yields a compatible cubic realization
\[
c_P(x)=(c_Q(\pi(x)),h([x])).
\]
We show that it is order-embedding. The implication \(x\le y\Rightarrow c_P(x)\le c_P(y)\) is immediate from covers. Conversely, assume \(c_P(x)\le c_P(y)\). Then \(c_Q(\pi(x))\le c_Q(\pi(y))\), hence \(\pi(x)\le \pi(y)\). If \(x\) and \(y\) were incomparable, then \(\pi(x)\neq\pi(y)\), so in fact \(\pi(x)<\pi(y)\), and by definition of \(\widehat{R}(\pi)\) there would be an auxiliary edge \([y]\to[x]\). Thus \(h([y])<h([x])\), contradicting \(c_P(x)\le c_P(y)\). Hence \(x\le y\).

Thus the compatible order-embedding cubic realizations are exactly those coming from functions \(h\) strictly increasing along edges of \(\widehat{R}(\pi)\). Finally, such an \(h\) exists if and only if \(\widehat{R}(\pi)\) is acyclic.
\end{proof}

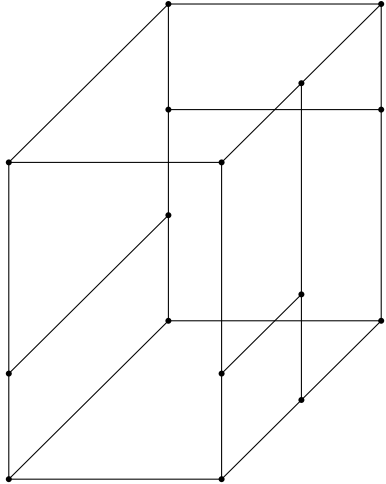
\begin{figure}[ht]
\centering
\begin{tikzpicture}[scale=0.7, line cap=round, line join=round]
  \def\c{2}
  \def\xa{3}
  \def\ya{3}
  \def\l{4}
  \coordinate (q0)   at (0,0);
  \coordinate (qb)   at (\l,0);
  \coordinate (qa)   at (\xa,\ya);
  \coordinate (q1)   at (\xa+\l,\ya);
  \coordinate (qcpt) at ($(qb)!0.5!(q1)$);

  \coordinate (0_0) at ($(q0)+(0,0)$);
  \coordinate (0_1) at ($(q0)+(0,\c)$);
  \coordinate (0_2) at ($(q0)+(0,{3*\c})$);

  \coordinate (b_0) at ($(qb)+(0,0)$);
  \coordinate (b_1) at ($(qb)+(0,\c)$);
  \coordinate (b_2) at ($(qb)+(0,{3*\c})$);

  \coordinate (c_0) at ($(qcpt)+(0,0)$);
  \coordinate (c_1) at ($(qcpt)+(0,\c)$);
  \coordinate (c_2) at ($(qcpt)+(0,{3*\c})$);

  \coordinate (1_0) at ($(q1)+(0,0)$);
  \coordinate (1_1) at ($(q1)+(0,{2*\c})$);
  \coordinate (1_2) at ($(q1)+(0,{3*\c})$);

  \coordinate (a_0) at ($(qa)+(0,0)$);
  \coordinate (a_1) at ($(qa)+(0,\c)$);
  \coordinate (a_2) at ($(qa)+(0,{2*\c})$);
  \coordinate (a_3) at ($(qa)+(0,{3*\c})$);

  \draw (0_0) -- (0_1) -- (0_2);
  \draw (b_0) -- (b_1) -- (b_2);
  \draw (c_0) -- (c_1) -- (c_2);
  \draw (1_0) -- (1_1) -- (1_2);
  \draw (a_0) -- (a_1) -- (a_2) -- (a_3);

  \draw (0_0) -- (a_0) -- (1_0);
  \draw (0_0) -- (b_0) -- (c_0) -- (1_0);

  \draw (0_2) -- (a_3) -- (1_2);
  \draw (0_2) -- (b_2) -- (c_2) -- (1_2);

  \draw (0_1) -- (a_1);
  \draw (b_1) -- (c_1);
  \draw (a_2) -- (1_1);

  \fill (0_0) circle (1.6pt);
  \fill (0_1) circle (1.6pt);
  \fill (0_2) circle (1.6pt);
  \fill (b_0) circle (1.6pt);
  \fill (b_1) circle (1.6pt);
  \fill (b_2) circle (1.6pt);
  \fill (c_0) circle (1.6pt);
  \fill (c_1) circle (1.6pt);
  \fill (c_2) circle (1.6pt);
  \fill (1_0) circle (1.6pt);
  \fill (1_1) circle (1.6pt);
  \fill (1_2) circle (1.6pt);
  \fill (a_0) circle (1.6pt);
  \fill (a_1) circle (1.6pt);
  \fill (a_2) circle (1.6pt);
  \fill (a_3) circle (1.6pt);
\end{tikzpicture}
\caption{A cubic construction which does not admit an order-embedding deformation since its augmented pre-Reeb graph is not acyclic.}
\end{figure}

Now let
\[
\mathrm{pt} = P_0 \xleftarrow{\pi_1} P_1 \xleftarrow{\pi_2} P_2 \xleftarrow{} \cdots
\]
be a tower of cylindrical projections starting from a point, and assume that each augmented pre-Reeb graph \(\widehat R(\pi_n)\) is acyclic. Then Proposition~\ref{prop:good-cubic-extension} constructs inductively an order-embedding cubic realization of each \(P_n\) by adding one coordinate at a time.

At the \(n\)-th step, the possible choices for the new coordinate are exactly the increasing functions on the augmented Reeb poset \(\widehat{\mathsf{Reeb}}(\pi_n)\). Thus the tower of augmented Reeb posets controls all order-embedding cubic realizations compatible with the tower of projections.

If, for every \(n\), the augmented Reeb poset \(\widehat{\mathsf{Reeb}}(\pi_n)\) is a total order, then the resulting cubic realization is combinatorially unique: for any compatible realization \(c\), the relation
\[
c(x)_i\le c(y)_i
\]
does not depend on the choice of \(c\). In particular, one may consider the minimal \(\mathbb Z_{\ge 0}\)-realization, obtained by choosing at each step the height function given by the position in the total order.

\begin{proposition}\label{lem:dimension-cylindrical}
Let
\[
\mathrm{pt} = P_0 \xleftarrow{\pi_1} P_1 \xleftarrow{\pi_2} P_2 \xleftarrow{} \cdots \xleftarrow{\pi_n} P_n
\]
be a tower of cylindrical projections. If each augmented pre-Reeb graph \(\widehat R(\pi_i)\) is acyclic, then
\[
\dim(P_n)=n.
\]
\end{proposition}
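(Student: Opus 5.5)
The plan is to prove the two inequalities \(\dim(P_n)\le n\) and \(\dim(P_n)\ge n\) separately. The upper bound comes from the inductive construction described just before the statement. The lower bound comes from embedding the Boolean lattice \(2^n\) into \(P_n\), using condition 3) of the definition of a cylindrical projection.

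\emph{Upper bound.} \(P_0=\mathrm{pt}\) has the order-embedding cubic realization into \(\mathbb R^0\). Suppose we have an order-embedding cubic realization of \(P_{i-1}\) into \(\mathbb R^{i-1}\). Since \(\widehat R(\pi_i)\) is acyclic, Proposition~\ref{prop:good-cubic-extension} extends it to a compatible order-embedding cubic realization of \(P_i\) into \(\mathbb R^{i}\). After \(n\) steps we obtain an order embedding \(P_n\hookrightarrow\mathbb R^n\), so \(\dim(P_n)\le n\).

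\emph{Lower bound.} We show by induction on \(i\) that \(P_i\) contains a subposet isomorphic to the Boolean lattice \(2^i=\{0,1\}^i\), and also that the Hasse diagram of \(P_i\) is connected. Connectivity is clear for \(\mathrm{pt}\). For the inductive step, every fiber of \(\pi_i\) is a chain containing \(b(q)\), and \(b\) maps covers of \(P_{i-1}\) to covers of \(P_i\). Hence every element of \(P_i\) is joined to the connected copy \(b(P_{i-1})\), so \(P_i\) is connected.

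Now let \(\iota:2^{i-1}\hookrightarrow P_{i-1}\) be an order embedding, and define \(\phi:2^{i}\to P_i\) by
\[
\phi(x,0)=b(\iota(x)),\qquad \phi(x,1)=t(\iota(x)).
\]
\emph{\(\phi\) preserves order.} The maps \(b\) and \(t\) are monotone, since by condition 3) they send covers to covers. Also \(b(q)\le t(q)\) because they are the bottom and top of the fiber chain. Hence \(x\le y\) implies \(b(x)\le b(y)\), \(t(x)\le t(y)\) and \(b(x)\le t(y)\).

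\emph{\(\phi\) reflects order.} Since \(\pi\) is monotone by condition 1), any relation \(\phi(x,\epsilon)\le\phi(y,\delta)\) forces \(x\le y\). It remains to exclude \(t(x)\le b(y)\).

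\emph{Excluding \(t(x)\le b(y)\).} Fix an order-embedding realization of \(P_i\) as in the upper bound, with last coordinate \(h\) constant on \(\sim_h\)-classes.
\begin{itemize}
\item All elements \(b(q)\) lie in a single \(\sim_h\)-class. Indeed, \(P_{i-1}\) is connected, and \(b\) sends covers to horizontal covers.
\item Likewise all elements \(t(q)\) lie in a single \(\sim_h\)-class.
\item The vertical chain from \(b(x)\) to \(t(x)\) has at least one edge, because fibers have at least 2 elements. So \(h(\text{top class})>h(\text{bottom class})\) strictly.
\end{itemize}
A relation \(t(x)\le b(y)\) would give \(h([t(x)])\le h([b(y)])=h([b(x)])\) by the order-embedding property, which contradicts the strict inequality. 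So \(\phi\) is an order embedding, completing the induction.

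Finally, \(\dim(2^n)=n\) is classical: the atoms \(a_j\) and coatoms \(\bar a_j\) of \(2^n\) form the standard example, with \(a_j\le\bar a_k\) if and only if \(j\ne k\), which has dimension \(n\). Since dimension is monotone under taking subposets, \(\dim(P_n)\ge n\).

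The main obstacle is the non-comparability \(t(x)\not\le b(y)\) in the lower bound. It is the only place where one needs the global separation of the two copies \(b(Q)\) and \(t(Q)\), and it is handled by the height function together with connectivity. This is also the only place where condition 3) is used, consistent with the remark in the paper.
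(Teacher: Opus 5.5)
Your proposal is correct and follows essentially the paper's argument: the upper bound iterates Proposition~\ref{prop:good-cubic-extension}, and the lower bound embeds the Boolean lattice as the iterated composites of the bottom and top sections. The key input is that \(b(P_{i-1})\) and \(t(P_{i-1})\) each lie in a single horizontal class, with strictly larger height on the top one; the paper phrases this as these points forming the corners of an \(n\)-box in the inductive realization, while you check order preservation and reflection directly via monotonicity of \(\pi\), \(b\), \(t\). One small correction to your closing remark: condition 3) is also used in your order-preserving step, for monotonicity of \(b\) and \(t\), not only in excluding \(t(x)\le b(y)\).
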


\begin{proof}
By Proposition~\ref{prop:good-cubic-extension}, \(P_n\) admits an order-embedding cubic realization in \(\mathbb R^n\). Hence \(\dim(P_n)\le n\).

For each \(i\), let
\[
b_i,t_i:P_{i-1}\hookrightarrow P_i
\]
be the bottom and top sections, and let \(X_n\subset P_n\) be the set of all composites obtained by choosing at each stage either \(b_i\) or \(t_i\). We claim that, in the inductive realization of \(P_n\), the set \(X_n\) is the set of corners of an \(n\)-box.

We argue by induction on \(n\). For \(n=0\), this is clear. Assume it for \(n-1\). Write the inductive realization of \(P_n\) as
\[
c_n(x)=\bigl(c_{n-1}(\pi_n(x)),\,h_n([x])\bigr),
\]
where \(h_n\) is a strictly increasing function on the augmented Reeb poset of \(\pi_n\).

By condition~3) in the definition of cylindrical projection, the images \(b_n(P_{n-1})\) and \(t_n(P_{n-1})\) are horizontal copies of the Hasse diagram of \(P_{n-1}\), hence horizontal classes. They are respectively minimal and maximal in the augmented Reeb poset, so \(h_n\) takes some values \( a_n<b_n\) on them. Therefore, for every \(x\in P_{n-1}\),
\[
c_n(b_n(x))=(c_{n-1}(x),a_n),\qquad
c_n(t_n(x))=(c_{n-1}(x),b_n).
\]

By the induction hypothesis, the points of \(X_{n-1}\) are the corners of an \((n-1)\)-box. Passing from \(X_{n-1}\) to \(X_n\) adds one independent choice for the last coordinate, namely \(a_n\) or \(b_n\). Thus \(X_n\) is the set of corners of an \(n\)-box.

Hence \(X_n\) is a subposet isomorphic to the Boolean lattice \(B_n\). Therefore \(\dim(P_n)\ge \dim(B_n) = n\). Thus \(\dim(P_n)=n\).
\end{proof}

\subsection{Weak orders in types \(A\) and \(B\) }

\begin{definition}
Let \(w\) be a word. For letters \(a,b\) in \(w\), write \(a\prec_w b\) if \(a\) occurs strictly to the left of \(b\) in \(w\).
\end{definition}

\begin{definition}
Let \(S_n\) be the symmetric group, written as words on \(\{1,\dots,n\}\).
The (right) \emph{weak order} is the poset whose cover relations are given by swapping adjacent letters \(\cdots a\,b\cdots \mapsto \cdots b\,a\cdots\) whenever \(a<b\).
\end{definition}

\begin{definition}\label{def:typeA-inversions}
Let \(w\in S_n\). For \(1\le i<j\le n\), the symbol \((j,i)\) is an \emph{inversion} of \(w\) if \(j\prec_w i\).
Let \(\Inv(w)\) be the set of all such symbols \((j,i)\).
\end{definition}

The following is standard.

\begin{lemma}\label{lem:typeA-weak-inversions}
For \(u,v\in S_n\),
\[
u\le v \text{ in weak order}
\quad\Longleftrightarrow\quad
\Inv(u)\subseteq \Inv(v).
\]
\end{lemma}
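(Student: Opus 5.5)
We prove the two implications separately, using only the definition of the weak order as the transitive closure of adjacent swaps \(\cdots a\,b\cdots\mapsto\cdots b\,a\cdots\) with \(a<b\).

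\emph{Forward direction.} By transitivity of inclusion it suffices to treat a single cover \(u\to v\). Here \(v\) is obtained from \(u\) by swapping adjacent letters \(a\,b\) with \(a<b\). This swap changes the relative order of no pair other than \(\{a,b\}\), since every other letter keeps its position relative to both \(a\) and \(b\). Therefore
\[
\Inv(v)=\Inv(u)\cup\{(b,a)\},
\]
with \((b,a)\notin\Inv(u)\). In particular \(\Inv(u)\subseteq\Inv(v)\).

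\emph{Converse direction.} We induct on \(|\Inv(v)\setminus\Inv(u)|\). If this number is zero, then \(\Inv(u)=\Inv(v)\). A permutation is determined by its inversion set, because \(\Inv(w)\) records the relative order of every pair of letters, and hence the word itself. So \(u=v\).

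Otherwise, the key step is to find an adjacent pair \(a\,b\) in \(u\) with \(a<b\) and \((b,a)\in\Inv(v)\). Once we have it, let \(u'\) be \(u\) with this pair swapped. Then \(u\to u'\) is a cover, and by the computation above
\[
\Inv(u')=\Inv(u)\cup\{(b,a)\}\subseteq\Inv(v),
\]
with a strictly smaller difference from \(\Inv(v)\). Induction gives \(u'\le v\), hence \(u\le v\).

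\emph{Finding the adjacent pair (main obstacle).} Pick any \((j,i)\in\Inv(v)\setminus\Inv(u)\), so \(i<j\), \(i\prec_u j\) and \(j\prec_v i\). Let \(i=x_0,x_1,\dots,x_k=j\) be the consecutive letters of \(u\) from \(i\) to \(j\). Since \(x_0<x_k\), some adjacent pair satisfies \(x_m<x_{m+1}\). Among all such ascents, it is not immediate that one lies in \(\Inv(v)\), so we choose the pair \((i,j)\) to minimize the distance between \(i\) and \(j\) in \(u\). We then argue by contradiction: suppose no adjacent ascent \(x_m<x_{m+1}\) in this segment is an inversion of \(v\).

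Take an intermediate letter \(x\) with \(i\prec_u x\prec_u j\), and compare it with \(i\). If \(x>i\) and \((x,i)\in\Inv(v)\), then \((x,i)\) is a pair in \(\Inv(v)\setminus\Inv(u)\) that is closer together in \(u\), contradicting minimality. If \(x<i\), then \((i,x)\in\Inv(u)\subseteq\Inv(v)\), so \(i\prec_v x\). The remaining case is \(x>i\) and \((x,i)\notin\Inv(v)\), which again gives \(i\prec_v x\). So in all surviving cases \(i\prec_v x\). The symmetric comparison with \(j\) gives \(x\prec_v j\). Combining these yields \(i\prec_v j\), contradicting \(j\prec_v i\).

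Hence there are no intermediate letters: \(i\) and \(j\) are adjacent in \(u\) and form the required ascent. This case analysis is where care is needed; the remaining steps are routine.
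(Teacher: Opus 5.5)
Your proof is correct and complete. The paper gives no proof of this lemma; it records it as standard, alongside the type-$B$ analogue Lemma~\ref{lem:typeB-weak-inversions}, which it attributes to \cite{BB05}. So your argument fills a gap the paper leaves open rather than competing with a proof it gives.

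The standard textbook proof, as in \cite{BB05}, is Coxeter-theoretic. It works with left inversion sets of reflections and length additivity, and so handles types $A$ and $B$ (indeed all Coxeter groups) uniformly. Your route is an elementary, type-$A$-specific version of the same idea, in two steps:
\begin{itemize}
\item The forward direction is the observation that an adjacent swap changes the relative order of exactly one pair of letters.
\item The converse rests on your minimal-distance argument. A pair $(j,i)\in\Inv(v)\setminus\Inv(u)$ whose letters are closest together in $u$ must be adjacent in $u$, so it is an ascent that can be swapped while keeping $\Inv(u')\subseteq\Inv(v)$.
\end{itemize}
This buys a self-contained proof that uses only the definitions in the paper. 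Both directions are what the paper later relies on, for instance in the proof of Lemma~\ref{lem:typeA-nu-decreases}, where $\pi(w)\le\pi(v)$ is converted into an inclusion of inversion sets and incomparability into non-inclusion. The argument does not transfer verbatim to $W_n$, however. There the sign-flip covers and the symbols $(j,-i)$ are not simply relative orders of two letters of the word.

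One small presentational point. The hypothesis ``suppose no adjacent ascent in this segment is an inversion of $v$'' is never used. Your case analysis shows directly that, by minimality, any intermediate letter $x$ would force $i\prec_v x\prec_v j$, contradicting $j\prec_v i$. You can drop the contradiction framing and state the conclusion as: a distance-minimizing pair is adjacent in $u$.
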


\begin{definition}
Let \(W_n\) be the hyperoctahedral group, written as words in \(\{\pm1,\dots,\pm n\}\) using each absolute value exactly once. Put the signed order
\[
-n<\cdots<-1<1<\cdots<n.
\]
The (right) \emph{weak order} on \(W_n\) is the poset whose covers are:
\begin{enumerate}[label=\arabic*)]
    \item adjacent swaps \(\cdots a\,b\cdots \mapsto \cdots b\,a\cdots\) whenever \(a<b\) in the signed order;
    \item sign flip of the first letter \(a_1\mapsto -a_1\) whenever \(a_1>0\).
\end{enumerate}

\end{definition}

\begin{definition}\label{lem:typeB-two-letter-setup}
Fix \(1\le i<j\le n\). Consider the formal symbols
\[
(-i),\qquad (j,-i),\qquad (j,i).
\]
Let \(w\in W_n\). We define when these symbols belong to \(\Inv(w)\) by the following rules:
\begin{itemize}
\item \((-i)\in\Inv(w)\) iff the letter \(-i\) occurs in \(w\).

\item \((j,-i)\in\Inv(w)\) iff one of the following holds:
\[
(\pm j)\prec_w (-i), \quad (\pm i)\prec_w (-j).
\]

\item \((j,i)\in\Inv(w)\) iff one of the following holds:
\[
(\pm j)\prec_w i, \quad (\pm i)\prec_w (-j).
\]
\end{itemize}
Let \(\Inv(w)\) be the subset of all such symbols \((-i),(j,-i),(j,i)\) (over all \(1\le i<j\le n\)) selected by these rules.
\end{definition}

The following is again standard \cite{BB05}.

\begin{lemma}\label{lem:typeB-weak-inversions}
For \(u,v\in W_n\),
\[
u\le v \text{ in weak order}
\quad\Longleftrightarrow\quad
\Inv(u)\subseteq \Inv(v).
\]

\end{lemma}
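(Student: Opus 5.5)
The natural route is to identify the weak order on \(W_n\) with the containment order on inversion sets. Since the covers are exactly the two families of elementary moves, one shows that each cover adds exactly one symbol to \(\Inv\). Then one shows that containment of inversion sets can be realized by a chain of such covers.

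\textbf{Step 1: single covers.} The plan is to model \(W_n\) as permutations \(\sigma\) of \(\{\pm1,\dots,\pm n\}\) with \(\sigma(-k)=-\sigma(k)\), via the doubled word
\[
-a_n\cdots -a_1\,a_1\cdots a_n .
\]
In this model, the symbols \((-i)\), \((j,-i)\) and \((j,i)\) should correspond exactly to the \(\sigma\)-symmetric pairs of type-\(A\) inversions on the doubled word, one symbol per symmetric orbit. I will check this translation case by case from the rules in Definition~\ref{lem:typeB-two-letter-setup}. For example, \((j,i)\in\Inv(w)\) should mean that in the doubled word \(j\) appears before \(i\), which holds iff \(\pm j\prec_w i\) or \(\pm i\prec_w -j\). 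The \((j,-i)\) rule is checked the same way, and \((-i)\) corresponds to the pair \((i,-i)\).

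Under this dictionary, an adjacent swap \(ab\mapsto ba\) with \(a<b\) in \(w\) is a symmetric pair of adjacent swaps in the doubled word. It therefore adds exactly the one symbol corresponding to \(\{a,b\}\) and removes nothing. Similarly, the first-letter flip swaps the two middle letters \(-a_1,a_1\) and adds only \((-a_1)\). Hence every cover strictly increases \(\Inv\) by one element, and by transitivity \(u\le v\Rightarrow \Inv(u)\subseteq\Inv(v)\).

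\textbf{Step 2: converse.} Assume \(\Inv(u)\subsetneq\Inv(v)\). The main obstacle is to find a cover \(u\to u'\) with \(\Inv(u')\subseteq\Inv(v)\); induction on \(|\Inv(v)\setminus\Inv(u)|\) then finishes. I would argue via descents, in the usual style of the type-\(A\) argument transported to the doubled word.

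Suppose every available ascent of \(u\) (adjacent \(a<b\), or a positive first letter) adds a symbol outside \(\Inv(v)\). The aim is to derive \(\Inv(v)\subseteq\Inv(u)\). Pick a symbol \(s\in\Inv(v)\setminus\Inv(u)\) whose two letters are closest in the doubled word of \(u\). The letters strictly between them are then constrained by the transitivity properties of inversion sets. These properties say that if \((x,y)\) and \((y,z)\) are inversions then so is \((x,z)\), and if \((x,z)\) is an inversion then at least one of \((x,y)\), \((y,z)\) is; both \(\Inv(u)\) and \(\Inv(v)\) satisfy them, since they hold for any permutation's inversion set on the doubled word. Using these constraints, the letters between the two letters of \(s\) can be pushed to an adjacent pair that forms an ascent of \(u\) whose symbol lies in \(\Inv(v)\). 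Swapping or flipping at that ascent gives the desired cover \(u\to u'\).

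The symmetric structure guarantees that this adjacent pair is either a genuine adjacent pair in \(w\) or the middle pair \(-a_1,a_1\), so the move is indeed one of the two listed cover types. This verification, that the minimal witness lands on an allowed move, is where the care is needed.

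Alternatively, one may simply cite Björner--Brenti \cite{BB05}: right weak order in a Coxeter group is containment of (left) inversion sets of reflections. In that case the work reduces to Step 1's dictionary between Definition~\ref{lem:typeB-two-letter-setup} and the reflections of type \(B_n\).
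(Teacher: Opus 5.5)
Your proof is correct, and it takes a different route from the paper, which gives no proof of this lemma and simply cites Björner--Brenti \cite{BB05}. You embed \(W_n\) in \(S_{2n}\) via the centrally symmetric word \(\widetilde w=(-a_n)\cdots(-a_1)\,a_1\cdots a_n\). You identify the symbols of Definition~\ref{lem:typeB-two-letter-setup} with negation-orbits of type-\(A\) inversions of \(\widetilde w\), and then transport the classical type-\(A\) closest-pair argument.

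The dictionary does match the paper's somewhat idiosyncratic rules. For example, for \((j,-i)\) the four sign patterns \((+j,+i),(+j,-i),(-j,+i),(-j,-i)\) give ``never'', ``\(+j\prec_w -i\)'', ``\(+i\prec_w -j\)'', ``always'', exactly as the rule says.

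The closest-pair step also works with the closure and co-closure properties you invoke. Suppose \(y<x\), \(y\) precedes \(x\) in \(\widetilde u\) but follows it in \(\widetilde v\), and \(z\) lies strictly between them in \(\widetilde u\). Then each of the cases \(z<y\), \(y<z<x\), \(x<z\) yields a strictly closer pair that is a non-inversion of \(\widetilde u\) and an inversion of \(\widetilde v\). The hypothesis \(\Inv(u)\subseteq\Inv(v)\) is used in the two outer cases.

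What your route buys is self-containment. It proves the statement directly for the paper's specific membership rules and cover conventions (position swaps, flip of the first letter). The citation instead tacitly asks the reader to match these with the reflection-theoretic left inversion sets of \(B_n\) in \cite{BB05}, and your Step~1 essentially supplies that translation, as your last paragraph notes. The paper's route buys only brevity.

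Make two small points explicit when writing it up:
\begin{enumerate}
\item Your induction ends at some \(u^*\le v\) with \(\Inv(u^*)=\Inv(v)\), so you need \(w\mapsto\Inv(w)\) to be injective. This is immediate from your dictionary: \(\Inv(w)\) determines every inversion of \(\widetilde w\), hence determines \(\widetilde w\).
\item The adjacent ascent found in \(\widetilde u\) may lie in the left half. There it is the mirror image of an adjacent ascent of \(u\) carrying the same symbol, so it still gives an allowed swap.
\end{enumerate}
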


\section{Type \(A\): deletion \(S_n\to S_{n-1}\)}

Throughout this section, \(\pi:S_n\to S_{n-1}\) is the deletion map erasing the letter \(n\). This is the projection to the maximal parabolic subgroup \(S_{n-1}\); its fibers are the classes of the corresponding parabolic congruence on weak order, in the sense of Reading \cite{Rea04}.

\begin{lemma}\label{lem:typeA-pi-totally-ordered-fibers}
The map \(\pi\) is a cylindrical projection.
\end{lemma}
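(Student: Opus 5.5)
We verify the three conditions in the definition of a cylindrical projection directly. Throughout we use that \(\Inv(\pi(w))\) is \(\Inv(w)\) with all symbols \((n,i)\) removed, and that \(\Inv(w)\cap\{(n,i)\}\) records exactly which letters lie to the right of \(n\) in \(w\).

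\emph{Condition 1).} Let \(w\to w'\) be a cover, obtained by swapping adjacent letters \(a<b\). If \(b=n\), then deleting \(n\) gives \(\pi(w)=\pi(w')\). Otherwise \(a,b<n\) and \(a,b\) are adjacent in \(w\). After deleting \(n\) they remain adjacent, so \(\pi(w)\to\pi(w')\) is the same adjacent swap, hence a cover in \(S_{n-1}\).

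\emph{Condition 2).} Fix \(u=u_1\cdots u_{n-1}\in S_{n-1}\). Its fiber consists of the \(n\) words \(w^{(k)}\) obtained by inserting \(n\) into position \(k\) of \(u\), for \(k=1,\dots,n\). Moving \(n\) one step to the left is an adjacent swap \(u_{k-1}\,n\mapsto n\,u_{k-1}\) with \(u_{k-1}<n\). It is therefore a cover \(w^{(k)}\to w^{(k-1)}\), and the fiber is the chain
\[
w^{(n)}\to w^{(n-1)}\to\cdots\to w^{(1)},
\]
which has \(n\ge2\) elements. We must also check that this chain is the induced order on the fiber. By Lemma~\ref{lem:typeA-weak-inversions}, \(\Inv(w^{(k)})=\Inv(u)\cup\{(n,u_i): i\ge k\}\). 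These sets are nested, so the fiber is totally ordered in \(S_n\), consistently with the chain.

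\emph{Condition 3).} Here \(b(u)=un\) (the letter \(n\) appended at the end) and \(t(u)=nu\) (the letter \(n\) prepended). If \(u\to u'\) swaps adjacent letters \(a<b\), the same swap performed inside \(un\) or \(nu\) is a cover in \(S_n\), giving \(b(u)\to b(u')\) and \(t(u)\to t(u')\).

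The only point needing care is the claim in condition 2) that the fiber is a total order in the induced order, not merely a chain of covers. The inversion description of Lemma~\ref{lem:typeA-weak-inversions} handles this, since the inversion sets along the fiber are nested. The other steps are routine.
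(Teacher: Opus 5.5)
Your proof is correct and follows essentially the same approach as the paper. Deleting $n$ either preserves an adjacent swap or collapses it, the fiber is the chain obtained by moving $n$ leftward one step at a time, and appending or prepending $n$ carries covers of $S_{n-1}$ to covers of $S_n$. The extra inversion-set check in condition 2) is harmless but unnecessary: a chain of covers passing through every element of the fiber already makes the fiber totally ordered in the induced order, by transitivity.
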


\begin{proof}
A cover in the right weak order on \(S_n\) is an adjacent swap
\[
\cdots a\,b\cdots \mapsto \cdots b\,a\cdots
\qquad(a<b).
\]
If neither \(a\) nor \(b\) is \(n\), then deleting \(n\) produces the corresponding cover in \(S_{n-1}\).
If one of \(\{a,b\}\) is \(n\), then deleting \(n\) yields the same word on both sides. Hence \(\pi\) satisfies the cover condition.

Fix \(v=v_1\cdots v_{n-1}\in S_{n-1}\). The fiber \(\pi^{-1}(v)\) consists of the \(n\) words obtained by inserting \(n\) into \(v\), and these form a chain:
\[
v_1\cdots v_{n-1}n \;<\; v_1\cdots v_{n-2}n v_{n-1}\;<\;\cdots\;<\; v_1 n v_2\cdots v_{n-1}\;<\; n v_1\cdots v_{n-1}.
\]
Indeed, each step is the adjacent swap
\[
v_k\,n\mapsto n\,v_k,
\]
which is a cover because \(v_k<n\). Thus every fiber is totally ordered.

The bottom section sends \(v\) to \(v_1\cdots v_{n-1}n\), and the top section sends \(v\) to \(n v_1\cdots v_{n-1}\). Their images are the induced subposets of \(S_n\) consisting respectively of permutations with \(n\) at the end and with \(n\) at the beginning. Both are evidently isomorphic to \(S_{n-1}\), with any cover of  \(S_{n-1}\) present in the respective subposet of \(S_{n}\). Hence \(\pi\) is cylindrical.
\end{proof}

\subsection{Pre-Reeb graph and Reeb poset in type \(A\)}
Let  \(\mathcal P([n-1])\) denote the power set of \([n-1]=\{1,\dots,n-1\}\).
\begin{proposition}\label{prop:typeA-preReeb}
The pre-Reeb graph \(R(\pi)\) has vertex set canonically identified with \(\mathcal P([n-1])\).
Under this identification, the edges are precisely
\[
A \longrightarrow A\cup\{a\}\qquad(a\in [n-1]\setminus A).
\]
In particular, \(R(\pi)\) is the Hasse diagram of the Boolean lattice
\((\mathcal P([n-1]),\subseteq)\), and hence
\[
\mathsf{Reeb}(\pi)\ \cong\ (\mathcal P([n-1]),\subseteq).
\]
\end{proposition}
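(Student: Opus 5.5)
Our plan is to find an invariant of permutations in \(S_n\) that stays constant along horizontal covers and changes in a controlled way along vertical ones. The natural candidate is the set of letters to the left of \(n\):
\[
L(w):=\{\,a\in[n-1] : a\prec_w n\,\}\in\mathcal P([n-1]).
\]
Equivalently, \([n-1]\setminus L(w)\) is the set of \(i\) with \((n,i)\in\Inv(w)\).

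First we check that \(L\) is constant on \(\sim_h\)-classes. A horizontal cover swaps adjacent letters \(a,b\), both different from \(n\). This does not change which letters precede \(n\), so \(L\) descends to a map \(\bar L:V(R(\pi))\to\mathcal P([n-1])\).

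Next we show \(\bar L\) is a bijection.
\begin{itemize}
\item \emph{Surjectivity.} For a set \(A\), take \(w=\alpha\, n\, \beta\), where \(\alpha\) lists \(A\) and \(\beta\) lists the complement.
\item \emph{Injectivity.} This is the main point. We must show that any two words \(w,w'\) with \(L(w)=L(w')=A\) are connected by horizontal covers. Write \(w=\alpha n\beta\), where \(\alpha\) is a word on \(A\) and \(\beta\) a word on the complement. Adjacent transpositions inside \(\alpha\) or inside \(\beta\) are covers in one direction or the other, and none of them involves \(n\), so they are horizontal. Since any arrangement of \(\alpha\) can be reached from any other by adjacent transpositions, and likewise for \(\beta\), we can pass from \(w\) to \(\alpha' n \beta'\) for arbitrary \(\alpha',\beta'\). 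Hence \(w\sim_h w'\).
\end{itemize}

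Then we identify the edges. A vertical cover has the form \(\cdots a\,n\cdots\mapsto\cdots n\,a\cdots\), so it sends \(L=A\) to \(A\setminus\{a\}\) with \(a\in A\).

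This reverses the direction compared with the statement. We therefore take the complement identification \(A(w):=[n-1]\setminus L(w)\), the set of letters to the right of \(n\), i.e.\ the \(i\) with \((n,i)\in\Inv(w)\). Under it, vertical covers are exactly \(A\to A\cup\{a\}\) with \(a\notin A\). Every such edge is realized: take the representative \(\alpha\, a\, n\,\beta\) and swap \(a\) and \(n\).

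Thus \(R(\pi)\) is the Hasse diagram of the Boolean lattice. Its reachability poset is therefore \((\mathcal P([n-1]),\subseteq)\).

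The only nontrivial step is the injectivity (connectivity of the fibers of \(\bar L\)). It reduces to the standard fact that adjacent transpositions connect all orderings of a set, together with the observation that such swaps never involve \(n\). Everything else is immediate from the description of covers.
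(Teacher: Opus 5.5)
Your proposal is correct and follows essentially the same route as the paper. The paper also indexes horizontal classes by \(A(w)\), the set of letters to the right of \(n\) (your complement of \(L(w)\)), connects words with the same set by adjacent swaps inside the two blocks, and identifies vertical covers with the swaps \(a\,n\mapsto n\,a\), which give exactly the edges \(A\to A\cup\{a\}\). Your initial detour through \(L(w)\) is harmless, since complementing it lands exactly on the paper's identification via the \(n\)-inversions \((n,i)\).
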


\begin{proof}
For \(w\in S_n\), define
\[
A(w):=\{\,i\in[n-1]\mid n\prec_w i\,\},
\]
i.e.\ the set of letters \(i\) such that \((n,i) \in \Inv(w)\).

\emph{Step 1: horizontal classes.}
If \(w\to w'\) is a horizontal cover, then the adjacent swap does not involve \(n\), hence no letter crosses \(n\), so \(A(w)=A(w')\).
Conversely, if \(A(w)=A(w')\), write \(w=u\,n\,v\) and \(w'=u'\,n\,v'\) where \(v,v'\) are words on the same letter set \(A(w)\) and \(u,u'\) are words on the complementary letter set \([n-1]\setminus A(w)\).
Using adjacent swaps inside \(u\) and inside \(v\) (never crossing \(n\)), one connects \(w\) to \(w'\) by a zig-zag of horizontal covers. Thus horizontal classes are indexed by subsets \(A\subseteq[n-1]\) via \([w]\leftrightarrow A(w)\).

\emph{Step 2: vertical covers.}
A cover is vertical iff it involves \(n\), hence has the form
\[
\cdots\, a\, n\, \cdots\ \longrightarrow\ \cdots\, n\, a\, \cdots \qquad (a\in[n-1]).
\]
If \(A=A(w)\), then \(a\) is immediately left of \(n\), hence \(a\notin A\), and after the swap \(a\) lies to the right of \(n\), so the new subset is \(A\cup\{a\}\).
Therefore every vertical cover yields an edge \(A\to A\cup\{a\}\) in \(R(\pi)\).

Conversely, given \(A\subseteq[n-1]\) and \(a\notin A\), choose a representative \(w\) in the class \(A\) with \(a\) immediately left of \(n\), i.e.
\[
w=(\text{a word on }[n-1]\setminus(A\cup\{a\}))\,a\,n\,(\text{a word on }A).
\]
Then swapping \(a\,n\mapsto n\,a\) is a vertical cover producing the edge \(A\to A\cup\{a\}\).
Hence these are exactly all edges of \(R(\pi)\).

\emph{Step 3: reachability.}
Along edges \(A\to A\cup\{a\}\) the set increases, so \(R(\pi)\) is acyclic.
A directed path from \(A\) to \(B\) exists iff \(A\subseteq B\), hence the reachability poset is
\((\mathcal P([n-1]),\subseteq)\).
\end{proof}

\subsection{Augmented pre-Reeb graph and augmented Reeb poset in type \(A\)}

We keep the notation of Proposition~\ref{prop:typeA-preReeb}, so \(V(R(\pi))\cong\mathcal P([n-1])\).
Define the binary valuation
\[
\nu(A):=\sum_{i\in A}2^{\,i-1}\in\{0,1,\dots,2^{n-1}-1\}.
\]
For \(A\neq [n-1]\), define its \emph{successor} \(s(A)\) as follows: let
\[
m:=\min\bigl([n-1]\setminus A\bigr),
\qquad
s(A):=\bigl(A\setminus [m-1]\bigr)\cup\{m\}.
\]
Then \(\nu(s(A))=\nu(A)+1\).

\begin{lemma}\label{lem:typeA-nu-decreases}
If \(A\to B\) is an edge of \(\widehat{R}(\pi)\), then \(\nu(B)>\nu(A)\).
\end{lemma}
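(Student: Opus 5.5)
Edges of \(\widehat R(\pi)\) are of two kinds, and I will treat them separately.

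\emph{Edges of \(R(\pi)\).} By Proposition~\ref{prop:typeA-preReeb} these are exactly the edges \(A\to A\cup\{a\}\) with \(a\notin A\). For such an edge \(\nu(A\cup\{a\})=\nu(A)+2^{a-1}>\nu(A)\).

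\emph{Auxiliary edges.} Suppose \(A\to B\) is an auxiliary edge. Then there are \(v\) with \(A(v)=A\) and \(w\) with \(A(w)=B\) such that \(\pi(w)<\pi(v)\) in \(S_{n-1}\) and \(v,w\) are incomparable in \(S_n\). I will translate everything into inversion sets using Lemma~\ref{lem:typeA-weak-inversions}. We have
\[
\Inv(v)=\Inv(\pi(v))\sqcup\{(n,i): i\in A\},\qquad \Inv(w)=\Inv(\pi(w))\sqcup\{(n,i): i\in B\}.
\]
The hypothesis \(\pi(w)<\pi(v)\) gives \(\Inv(\pi(w))\subsetneq\Inv(\pi(v))\). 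If \(B\subseteq A\), then \(\Inv(w)\subseteq\Inv(v)\), so \(w\le v\), contradicting incomparability. Hence \(B\not\subseteq A\).

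This alone does not force \(\nu(B)>\nu(A)\), so I need a sharper constraint linking \(A\), \(B\) and the inclusion of the inversion sets of the base. The key observation is this. Suppose \(i<j<n\), \(j\in A\), \(i\notin A\). Then in \(v\) the letter \(j\) lies right of \(n\) and \(i\) lies left of \(n\), so \(i\prec_v j\) and \((j,i)\notin\Inv(\pi(v))\). Since \(\Inv(\pi(w))\subseteq\Inv(\pi(v))\), we also get \((j,i)\notin\Inv(\pi(w))\), that is, \(i\prec_w j\). Consequently, if \(j\in B\) (so \(n\prec_w j\) is possible) and \(i\notin B\), there is no constraint; but if \(i\in B\), then \(n\prec_w i\prec_w j\) forces \(j\in B\).

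From this I get the following closure property: if \(i\in B\), \(j>i\), \(j\in A\) and \(i\notin A\), then \(j\in B\). Now let \(m\) be the largest element of the symmetric difference \(A\,\triangle\,B\). It suffices to show \(m\in B\setminus A\), since then \(\nu(B)-\nu(A)\ge 2^{m-1}-\sum_{k<m}2^{k-1}>0\). Suppose instead \(m\in A\setminus B\). Because \(B\not\subseteq A\), there is some \(i\in B\setminus A\), and \(i<m\) by maximality of \(m\). Applying the closure property with \(j=m\) gives \(m\in B\), a contradiction. So \(m\in B\setminus A\), which proves the lemma.

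The main obstacle is this last step: extracting the closure property correctly from the inclusion \(\Inv(\pi(w))\subseteq\Inv(\pi(v))\) together with the relative positions of letters with respect to \(n\). I expect the directional bookkeeping to need care, specifically which letters sit left or right of \(n\), and the fact that \(i<j\) is what makes \((j,i)\) a legitimate inversion symbol.
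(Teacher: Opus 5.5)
Your proof is correct and is essentially the paper's argument: edges of \(R(\pi)\) add the single bit \(2^{a-1}\), and for an auxiliary edge you take the most significant differing position \(m\), use \(B\not\subseteq A\) to find some \(i<m\) with \(i\in B\setminus A\), and derive a contradiction from the pair \((m,i)\) via \(\Inv(\pi(w))\subseteq\Inv(\pi(v))\). The only difference is cosmetic: you phrase the contradiction as a closure property obtained by contraposition, whereas the paper directly exhibits \((t,j)\in\Inv(\pi(w))\setminus\Inv(\pi(v))\).
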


\begin{proof}
We split into edges of \(R(\pi)\) and auxiliary edges.

\medskip\noindent
\textbf{Step 1: edges of \(R(\pi)\).}
If \(A\to B\) is an edge of \(R(\pi)\), then \(B=A\cup\{a\}\) for some \(a\notin A\). Hence
\[
\nu(B)=\nu(A)+2^{a-1}>\nu(A).
\]

\medskip\noindent
\textbf{Step 2: auxiliary edges.}
Let \(A\to B\) be an auxiliary edge. Fix witnesses \(v\in A\) and \(w\in B\) with
\[
\pi(w)<\pi(v)\quad\text{in }S_{n-1},
\qquad
v,w\text{ incomparable in }S_n.
\]
The binary strings of \(\nu(A)\) and \(\nu(B)\) are exactly the characteristic strings of the
\(n\)-inversions
\[
(n,n-1),\dots,(n,1)
\]
of \(v\) and \(w\): the bit at position \(i\) records whether \(i\in A\) or \(i\in B\), equivalently whether \((n,i)\in\Inv(v)\) or \((n,i)\in\Inv(w)\).

Let \(t\) be the most significant position where these two bitstrings differ. We claim that at \(t\) the bit changes from \(0\) for \(A\) to \(1\) for \(B\), equivalently \(t\in B\setminus A\).

Assume for contradiction that \(t\in A\setminus B\).
 Since \(\pi(w)\le \pi(v)\), every inversion of \(\pi(w)\) is an inversion of \(\pi(v)\). Hence the incomparability of \(v\), \(w\) must come from the \(n\)-inversions, so there exists some $j < t$ such that \(j\in B\setminus A\). 
Now in \(w\), the letter \(t\) lies to the left of \(n\) while \(j\) lies to the right of \(n\), so
\[
t\prec_{\pi(w)} j,
\]
hence \((t,j)\in\Inv(\pi(w))\).
On the other hand, in \(v\), the letter \(j\) lies to the left of \(n\) while \(t\) lies to the right of \(n\), so
\[
j\prec_{\pi(v)} t,
\]
hence \((t,j)\notin\Inv(\pi(v))\).
This contradicts \(\pi(w)\le \pi(v)\).

Therefore the most significant differing bit cannot change from \(1\) to \(0\). So it changes from \(0\) to \(1\), i.e.\ \(t\in B\setminus A\). Hence \(\nu(B)>\nu(A)\).
\end{proof}

By Lemma~\ref{lem:typeA-nu-decreases}, every edge of \(\widehat{R}(\pi)\) strictly increases \(\nu\); hence \(\widehat{R}(\pi)\) is acyclic.

\begin{proposition}\label{prop:typeA-critical-aux}
For every \(A\neq [n-1]\), the edge
\[
A \longrightarrow s(A)
\]
belongs to \(\widehat{R}(\pi)\).
More precisely, it is an edge of \(R(\pi)\) when \(1\notin A\), and it is an auxiliary edge when \(1\in A\).
\end{proposition}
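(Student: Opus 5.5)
Fix \(A\neq[n-1]\), let \(m=\min([n-1]\setminus A)\), so that \(s(A)=(A\setminus[m-1])\cup\{m\}\). Since \(m\) is the least element outside \(A\), we have \([m-1]\subseteq A\). The proof splits according to whether \(m=1\).

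\emph{Case \(1\notin A\).} Then \(m=1\), \([m-1]=\emptyset\), and \(s(A)=A\cup\{1\}\) with \(1\notin A\). By Proposition~\ref{prop:typeA-preReeb}, \(A\to A\cup\{1\}\) is an edge of \(R(\pi)\), and we are done.

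\emph{Case \(1\in A\).} Then \(m\ge 2\) and \(A=[m-1]\sqcup C\) with \(C\subseteq\{m+1,\dots,n-1\}\), while \(s(A)=\{m\}\sqcup C\). We exhibit explicit witnesses \(v\) in class \(A\) and \(w\) in class \(s(A)\), and check the two conditions of Definition~\ref{def:auxiliary-edges}. Let \(D=[n-1]\setminus(A\cup\{m\})\), which consists of letters larger than \(m\). Fix a word \(d\) on \(D\) and a word \(c\) on \(C\). Put
\[
v = d\,m\,n\,(1\,2\cdots m-1)\,c,\qquad w = d\,(1\,2\cdots m-1)\,n\,m\,c .
\]
Then \(A(v)=A\) and \(A(w)=\{m\}\cup C=s(A)\).

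\emph{Projections.} We have
\[
\pi(v)=d\,m\,1\cdots(m-1)\,c,\qquad \pi(w)=d\,1\cdots(m-1)\,m\,c .
\]
Passing from \(\pi(w)\) to \(\pi(v)\) moves \(m\) leftward past the smaller letters \(1,\dots,m-1\). This creates exactly the inversions \((m,i)\) for \(i<m\) and changes nothing else. By Lemma~\ref{lem:typeA-weak-inversions}, \(\pi(w)<\pi(v)\) strictly, the strictness coming from \(m\ge 2\).

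\emph{Incomparability.} By Lemma~\ref{lem:typeA-weak-inversions} it suffices to find inversions on each side that the other lacks. The pair \((n,1)\) lies in \(\Inv(v)\) but not in \(\Inv(w)\), since \(1\in A\) and \(1\notin s(A)\). The pair \((n,m)\) lies in \(\Inv(w)\) but not in \(\Inv(v)\). Hence \(v\) and \(w\) are incomparable, and \(A\to s(A)\) is an auxiliary edge.

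The main obstacle is choosing witnesses that satisfy both conditions simultaneously. The inversions not involving \(n\) must be nested in the correct direction, which is arranged by letting the only difference between \(\pi(v)\) and \(\pi(w)\) be the block \(1\cdots m-1\) versus \(m\). The \(n\)-inversions, by contrast, must cross, and this is what forces \(m\ge 2\). That is exactly why the hypothesis \(1\in A\) is needed in this case.
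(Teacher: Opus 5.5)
Your proof is correct and follows the paper's strategy. When $1\in A$ you exhibit explicit witnesses in the classes $A$ and $s(A)$, check $\pi(w)<\pi(v)$, and get incomparability from the $n$-inversion $(n,m)$ on one side versus $(n,j)$, $j<m$, on the other. The only difference is the choice of witnesses: the paper compares inversion sets of $\mathrm{dec}([n-1]\setminus A)\,n\,\mathrm{dec}(A)$ and $\mathrm{inc}([n-1]\setminus s(A))\,n\,\mathrm{inc}(s(A))$, whereas your words differ only in where $m$ sits, so $\pi(w)<\pi(v)$ is visible directly as a chain of $m-1$ covers.
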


\begin{proof}
Let \(m=\min([n-1]\setminus A)\), so \(s(A)=(A\setminus[m-1])\cup\{m\}\).

If \(m=1\) (equivalently \(1\notin A\)), then \(s(A)=A\cup\{1\}\), so \(A\to s(A)\) is one of the old edges of \(R(\pi)\).

Assume \(m>1\). Write \(\mathrm{inc}(S)\) (resp.\ \(\mathrm{dec}(S)\)) for the word listing \(S\) in increasing (resp.\ decreasing) order, and set \(B:=s(A)\).
Define
\[
v:= \bigl(\mathrm{dec}([n-1]\setminus A)\bigr)\, n \,\bigl(\mathrm{dec}(A)\bigr)\in S_n,
\qquad
w:= \bigl(\mathrm{inc}([n-1]\setminus B)\bigr)\, n \,\bigl(\mathrm{inc}(B)\bigr)\in S_n.
\]
Then \(v\) lies in the horizontal class \(A\) and \(w\) lies in the horizontal class \(B\), and
\[
\pi(v)=\mathrm{dec}([n-1]\setminus A)\,\mathrm{dec}(A),
\qquad
\pi(w)=\mathrm{inc}([n-1]\setminus B)\,\mathrm{inc}(B).
\]
The word \(\pi(w)\) has no inversions inside either block, so its inversions are exactly the cross-block pairs \((j,i)\) with \(i\in B\), \(j\notin B\), and \(i<j\).
Each such pair is also an inversion of \(\pi(v)\), hence \(\mathrm{Inv}(\pi(w))\subseteq \mathrm{Inv}(\pi(v))\), so \(\pi(w)\le \pi(v)\).
This inclusion is strict since \(m\notin A\) but \(m-1\in A\), hence \((m,m-1)\in\mathrm{Inv}(\pi(v))\) while \((m,m-1)\notin\mathrm{Inv}(\pi(w))\).
Thus \(\pi(w)<\pi(v)\).

Finally, \(m\in B\setminus A\) implies \((n,m)\in \mathrm{Inv}(w)\setminus \mathrm{Inv}(v)\), while \([m-1]\subseteq A\setminus B\) implies \((n,j)\in \mathrm{Inv}(v)\setminus \mathrm{Inv}(w)\) for every \(j<m\).
Hence \(v,w\) are incomparable in \(S_n\), so \(A\to B\) is an auxiliary edge, i.e.\ an edge of \(\widehat{R}(\pi)\).
\end{proof}

\begin{corollary}\label{cor:typeA-augreeb-chain}
The augmented Reeb poset \(\widehat{\mathsf{Reeb}}(\pi)\) is the total order on
\(\mathcal P([n-1])\) induced by \(\nu\):
\[
A \le_{\widehat{\mathsf{Reeb}}(\pi)} B \quad\Longleftrightarrow\quad \nu(A)\le \nu(B).
\]
\end{corollary}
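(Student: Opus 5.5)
The corollary follows by combining Lemma~\ref{lem:typeA-nu-decreases} with Proposition~\ref{prop:typeA-critical-aux}; no new combinatorics is needed. We prove the two implications separately.

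First, suppose \(A\le_{\widehat{\mathsf{Reeb}}(\pi)} B\). Then there is a directed path \(A\rightsquigarrow B\) in \(\widehat R(\pi)\). By Lemma~\ref{lem:typeA-nu-decreases}, every edge along this path strictly increases \(\nu\). Hence \(\nu(A)\le\nu(B\)), with equality only for the empty path, that is, when \(A=B\). This same observation is what shows \(\widehat R(\pi)\) is acyclic, so the reachability poset is well defined.

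Conversely, suppose \(\nu(A)\le\nu(B)\). If \(A=B\) there is nothing to prove, so assume \(\nu(A)<\nu(B)\); in particular \(A\neq[n-1]\). By Proposition~\ref{prop:typeA-critical-aux}, the edge \(A\to s(A)\) lies in \(\widehat R(\pi)\), and \(\nu(s(A))=\nu(A)+1\). We iterate this. The sets \(A, s(A), s^2(A),\dots\) have \(\nu\)-values \(\nu(A),\nu(A)+1,\nu(A)+2,\dots\). Each application of \(s\) is legitimate as long as the current set is not \([n-1]\), and this holds because its \(\nu\)-value is still below \(\nu(B)\le 2^{n-1}-1=\nu([n-1])\). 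So after \(k=\nu(B)-\nu(A)\) steps we reach a set \(C\) with \(\nu(C)=\nu(B)\).

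To conclude \(C=B\), we use that \(\nu\) is a bijection \(\mathcal P([n-1])\to\{0,\dots,2^{n-1}-1\}\), since it is the binary expansion. Thus the successor edges form a directed path \(A\rightsquigarrow B\), giving \(A\le_{\widehat{\mathsf{Reeb}}(\pi)} B\).

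The only point needing care is the identity \(\nu(s(A))=\nu(A)+1\), which the paper asserts. It holds because \(s\) is binary increment: the trailing ones \([m-1]\subseteq A\) are cleared and bit \(m\) is set. Given this, the statement follows directly, so I expect no real obstacle.
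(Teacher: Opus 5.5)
Your proof is correct and follows the paper's argument exactly. The forward direction applies Lemma~\ref{lem:typeA-nu-decreases} along a directed path, and the converse iterates the successor edges of Proposition~\ref{prop:typeA-critical-aux} and concludes by bijectivity of \(\nu\); your checks that \(s\) stays defined at each step and that \(\nu(s(A))=\nu(A)+1\) (binary increment) fill in details the paper leaves implicit.
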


\begin{proof}
If \(A\rightsquigarrow B\) is a directed path in \(\widehat{R}(\pi)\), then Lemma~\ref{lem:typeA-nu-decreases} implies \(\nu(A)\le \nu(B)\).

Conversely, assume \(\nu(A)\le \nu(B)\). Iterating Proposition~\ref{prop:typeA-critical-aux} gives a directed path
\[
A\to s(A)\to s^{2}(A)\to\cdots\to s^{\,\nu(B)-\nu(A)}(A),
\]
and \(\nu\bigl(s^{k}(A)\bigr)=\nu(A)+k\). Since \(\nu\) is a bijection \(\mathcal P([n-1])\to\{0,\dots,2^{n-1}-1\}\), the endpoint equals \(B\).
\end{proof}

The total order \(\widehat{\mathsf{Reeb}}(\pi)\) has \(2^{n-1}\) elements, hence the valuation \(\nu\) is the minimal \(\mathbb Z_{\ge 0}\)-valued height function on \(\widehat{\mathsf{Reeb}}(\pi)\). Thus, in type \(A\), the cubic coordinates constructed by taking height \(\nu\) at each step are the minimal \(\mathbb Z_{\ge 0}\)-coordinates compatible with the tower of deletion projections. 

\section{Type \(B\): deletion \(W_n\to W_{n-1}\)}

Throughout this section, \(\pi:W_n\to W_{n-1}\) is the deletion map erasing the letter \(\pm n\).
This is the projection to the maximal parabolic subgroup \(W_{n-1}\); its fibers are the classes of the corresponding parabolic congruence on weak order, in the sense of Reading \cite{Rea04}.

\begin{lemma}\label{lem:typeB-pi-totally-ordered-fibers}
The map \(\pi\) is a cylindrical projection.
\end{lemma}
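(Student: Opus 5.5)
I will follow the template of Lemma~\ref{lem:typeA-pi-totally-ordered-fibers}, checking the three conditions of a cylindrical projection directly on words in \(W_n\). The map \(\pi\) erases whichever of \(n\) or \(-n\) occurs.

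\emph{Condition 1) (covers).} There are two kinds of covers in \(W_n\).
\begin{itemize}
\item An adjacent swap \(\cdots a\,b\cdots\mapsto\cdots b\,a\cdots\) with \(a<b\) in the signed order. If neither letter has absolute value \(n\), erasing \(\pm n\) gives the same swap in \(W_{n-1}\). This is a cover there, because \(a\) and \(b\) remain adjacent and their signed comparison is unchanged. If one of the two letters is \(\pm n\), both sides have the same image.
\item A sign flip of the first letter \(a_1\mapsto -a_1\). If \(|a_1|\ne n\), the image is again a first-letter flip, hence a cover in \(W_{n-1}\). If \(a_1=n\), both images coincide; in that case the first letter of the image is the second letter of the original word, which is not flipped.
\end{itemize}

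\emph{Condition 2) (fibers are chains).} Fix \(v=v_1\cdots v_{n-1}\in W_{n-1}\). The fiber consists of the \(2n\) words obtained by inserting \(n\) or \(-n\) in one of \(n\) positions. I claim it forms the chain
\[
v_1\cdots v_{n-1}\,n<v_1\cdots n\,v_{n-1}<\cdots<n\,v_1\cdots v_{n-1}<-n\,v_1\cdots v_{n-1}<v_1\,{-n}\,v_2\cdots<\cdots<v_1\cdots v_{n-1}\,{-n}.
\]
\begin{itemize}
\item In the first half, each step swaps \(v_k\,n\mapsto n\,v_k\), which is a cover since \(v_k<n\).
\item The middle step is the sign flip of the first letter \(n>0\).
\item In the second half, each step swaps \(-n\,v_k\mapsto v_k\,{-n}\), which is a cover since \(-n<v_k\).
\end{itemize}
These \(2n\) words exhaust the fiber, so the fiber is totally ordered with \(2n\ge2\) elements.

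This chain exhibits every element of the fiber, but to be safe I should also check that the induced order is exactly this chain. That means no extra comparabilities beyond those implied by the chain, and the chain is not skipping a comparability.
\begin{itemize}
\item Since the chain consists of covers, all consecutive pairs are comparable, and a total order follows by transitivity.
\item Inside a fiber, no other cover can occur: a vertical cover must move \(\pm n\), and the only moves available are the swaps and the flip listed above.
\end{itemize}
So the fiber is precisely this chain, with \(b(v)=v\,n\) and \(t(v)=v\,{-n}\).

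\emph{Condition 3) (sections).} I must show that \(v\mapsto v\,n\) and \(v\mapsto v\,{-n}\) carry covers to covers.
\begin{itemize}
\item For swaps among \(v_1,\dots,v_{n-1}\), appending a letter at the end does not affect adjacency or signed order, so the swap remains a cover.
\item For a sign flip of \(v_1\), the word \(v\,{\pm n}\) still has \(v_1\) as its first letter when \(n\ge2\), so the flip remains a cover.
\end{itemize}
This is where type \(B\) differs from type \(A\): I deliberately choose the section \(t(v)=v\,{-n}\) with \(\mp n\) at the end, rather than \(\pm n\) at the front. Placing \(\pm n\) at the front would obstruct the first-letter flip.

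The edge case \(n=1\) needs a separate check. Here \(W_0=\mathrm{pt}\) and \(W_1=\{1<-1\}\), and the conditions are trivial.

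The main obstacle I expect is being careful that the top of the fiber is \(v_1\cdots v_{n-1}\,{-n}\) and not \(-n\,v_1\cdots\). This follows from the covers \(-n\,v_k\mapsto v_k\,{-n}\), which use \(-n<v_k\) in the signed order.
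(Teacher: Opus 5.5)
Your proof is correct and follows essentially the same route as the paper. Like the paper, you check that covers either map to covers or collapse, exhibit the fiber as the explicit $2n$-element chain $v\,n<\cdots<n\,v<-n\,v<\cdots<v\,(-n)$ built from covers, and verify that the sections $v\mapsto v\,n$ and $v\mapsto v\,(-n)$ preserve covers; you spell out the first-letter flip and the $n=1$ case more carefully than the paper does. One small wording point: $b$ and $t$ are forced as the minimum and maximum of the fiber (which your chain already determines), not chosen, so your remark about ``deliberately choosing'' $t(v)=v\,(-n)$ is really the observation that the forced top section keeps $\pm n$ away from the first position.
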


\begin{proof}
A cover in the right weak order on \(W_n\) is either an adjacent swap
\[
\cdots a\,b\cdots \mapsto \cdots b\,a\cdots
\qquad(a<b)
\]
in the signed order
\[
-n<\cdots<-1<1<\cdots<n,
\]
or a sign flip of the first letter
\[
a_1\mapsto -a_1
\qquad(a_1>0).
\]
If the move does not involve \(\pm n\), deleting \(\pm n\) gives the corresponding cover in \(W_{n-1}\); otherwise deletion gives equality. Hence \(\pi\) satisfies the cover condition.

Fix \(v=v_1\cdots v_{n-1}\in W_{n-1}\). The fiber \(\pi^{-1}(v)\) consists of inserting \(n\) or \(-n\) into \(v\), hence has \(2n\) elements, and these form the chain
\[
v_1\cdots v_{n-1}n \;<\; v_1\cdots v_{n-2}n v_{n-1}\;<\;\cdots\;<\; n v_1\cdots v_{n-1}
\]
\[
\;<\; -n v_1\cdots v_{n-1}\;<\; v_1(-n)v_2\cdots v_{n-1}\;<\;\cdots\;<\; v_1\cdots v_{n-1}(-n).
\]
Indeed, \(a\,n\mapsto n\,a\) is a cover for every neighbor \(a\), \(n\mapsto -n\) in the first position is a cover, and \((-n)\,a\mapsto a\,(-n)\) is a cover for every neighbor \(a\). Thus every fiber is totally ordered.

The bottom section sends \(v\) to \(v_1\cdots v_{n-1}n\), and the top section sends \(v\) to \(v_1\cdots v_{n-1}(-n)\). Their images are the induced subposets of \(W_n\) consisting respectively of signed permutations with \(n\) at the end and with \(-n\) at the end. Both are evidently isomorphic to \(W_{n-1}\), with any cover of  \(W_{n-1}\) present in the respective subposet of \(W_{n}\). Hence \(\pi\) is cylindrical.
\end{proof}

\subsection{Pre-Reeb graph and Reeb poset in type B}

\begin{proposition}\label{prop:typeB-preReeb}\label{prop:typeB-Reeb-graph}
Vertices of \(R(\pi)\) are naturally parametrized by pairs \((\varepsilon,A)\) where
\begin{itemize}
    \item \(\varepsilon\in\{+,-\}\);
    \item \(A\subseteq\{\pm1,\dots,\pm(n-1)\}\) satisfies: for each \(i\in[n-1]\), at most one of \(+i,-i\) lies in \(A\).
\end{itemize}
Write \(|A|:=\{\,|a|\mid a\in A\,\}\subseteq[n-1]\).
In this parametrization, the edges of \(R(\pi)\) are exactly:
\begin{enumerate}[label=\arabic*)]
    \item for \(\varepsilon=+\), for every \(i\in[n-1]\setminus|A|\) and every \(\delta\in\{+,-\}\),
    \[
    (+,A)\ \longrightarrow\ \bigl(+,\,A\cup\{\delta i\}\bigr);
    \]
    \item for \(\varepsilon=-\), for every \(a\in A\),
    \[
    (-,A)\ \longrightarrow\ \bigl(-,\,A\setminus\{a\}\bigr);
    \]
    \item only when \(|A|=[n-1]\),
    \[
    (+,A)\ \longrightarrow\ (-,A).
    \]
\end{enumerate}
\end{proposition}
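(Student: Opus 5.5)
We mirror the proof of Proposition~\ref{prop:typeA-preReeb}. For \(w\in W_n\), let \(\varepsilon(w)\) be the sign of the letter \(\pm n\) in \(w\). We then define \(A(w)\) as the set of letters \(a\) of \(w\) with \(|a|\le n-1\) that lie on the "crossed" side of \(\pm n\):
\begin{itemize}
    \item if \(\varepsilon=+\), \(A(w)\) is the set of letters (with their signs) to the right of \(n\);
    \item if \(\varepsilon=-\), \(A(w)\) is the set of letters to the left of \(-n\).
\end{itemize}
In both cases \(A(w)\) contains at most one of \(\pm i\) for each \(i\), so \((\varepsilon(w),A(w))\) is a pair of the stated form. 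Conversely, every such pair arises, since we may prepend the remaining letters with arbitrary signs.

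\emph{Step 1: horizontal classes.} A horizontal cover either swaps two adjacent letters other than \(\pm n\), or flips the sign of a first letter that is not \(\pm n\). Neither move changes \(\varepsilon\). An adjacent swap does not move any letter across \(\pm n\), so it preserves \(A\). A sign flip of the first letter is harmless in both cases:
\begin{itemize}
    \item when \(\varepsilon=+\), the first letter lies left of \(n\), so it is not in \(A\);
    \item when \(\varepsilon=-\), the first letter can lie left of \(-n\), but then this is an issue, as flipping changes the signed element of \(A\).
\end{itemize}
I must therefore adjust the invariant in the \(\varepsilon=-\) case. I would take \(A(w)\) to be the set of \emph{absolute values} to the left of \(-n\), signed in a way invariant under the type-\(B\) moves. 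Because letters left of \(-n\) can be freely re-signed via the first-letter flips, the natural invariant is the set of letters to the \emph{right} of \(-n\), taken with their signs. Indeed, the fiber chain \(-n\,v\to v_1(-n)\cdots\) moves letters from the right of \(-n\) to its left, which matches edges of type 2), i.e.\ removing an element from \(A\). So I redefine: for \(\varepsilon=-\), \(A(w)\) is the signed set of letters to the right of \(-n\). This set is invariant under horizontal moves, because the first letter is never to the right of \(-n\) unless \(-n\) is absent, and \(-n\) is always present.

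For the converse, I show that two words with the same \((\varepsilon,A)\) are connected by horizontal moves. Write \(w=u\,(\pm n)\,v\). Here \(v\) is a fixed signed letter set rearranged by adjacent swaps, and \(u\) ranges over all signed words on the complementary absolute values. The prefix \(u\) is connected within itself by the type-\(B\) weak-order moves on the signed permutations of its letters, since the moves of \(W_k\) connect everything. This is the key point: the prefix \(u\) carries a full copy of \(W_k\), while the suffix \(v\) carries only a copy of \(S_k\).

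\emph{Step 2: vertical covers.} There are three kinds.
\begin{itemize}
    \item The swap \(a\,n\mapsto n\,a\) moves \(a\) (with its sign, \(|a|\notin|A|\)) to the right, giving the edge \((+,A)\to(+,A\cup\{a\})\); both signs \(\delta\) occur.
    \item The flip \(n\mapsto -n\) in first position requires everything to be to the right of \(n\), so \(|A|=[n-1]\). After the flip, the right side of \(-n\) is the same signed set, giving \((+,A)\to(-,A)\).
    \item The swap \((-n)\,a\mapsto a\,(-n)\) removes \(a\) from the right side, giving \((-,A)\to(-,A\setminus\{a\})\).
\end{itemize}
For each edge listed in the statement, I realize it by choosing a representative with the relevant letter adjacent to \(\pm n\) (or with \(n\) first). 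This is always possible by Step 1.

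The main obstacle is choosing the correct invariant in the \(\varepsilon=-\) case and verifying carefully that prefix connectivity (a full \(W_k\)) versus suffix connectivity (only \(S_k\)) yields exactly the claimed parametrization.
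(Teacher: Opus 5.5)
Your final argument is correct and is essentially the paper's proof: after your mid-course correction, the invariant $(\varepsilon,A)$ with $A$ the signed letters to the right of $\pm n$ is exactly the paper's. Your connectivity argument (the prefix carries a full copy of $W_k$, the suffix only $S_k$) and your three types of vertical covers match the paper's Steps 1 and 2. In a clean write-up, drop the initial ``crossed side'' definition, and state explicitly why these three are the only vertical covers: $+n$ is maximal and $-n$ is minimal in the signed order, and $-n\mapsto n$ is not a cover.
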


\begin{proof}
For \(w\in W_n\), let \(\varepsilon(w)\) be the sign of \( \pm n\) occurring in \(w\), and define
\[
A(w):=\{\,a\in\{\pm1,\dots,\pm(n-1)\}\mid \pm n \prec_w a\,\},
\]
the set of signed letters lying to the right of \(\pm n\). 

\emph{Step 1: horizontal classes.}
If \(w\to w'\) is a horizontal cover, the move does not involve \(\pm n\), so no letter crosses it. The only sign flip is at the first position, and a letter to the right of \(\pm n\) cannot reach position \(1\) without crossing \(\pm n \). Hence \(\varepsilon(w)=\varepsilon(w')\) and \(A(w)=A(w')\).

Conversely, if \(w,w'\) have the same pair \((\varepsilon,A)\), write \(w=u\,\varepsilon n\,v\) and \(w'=u'\, \varepsilon n\,v'\), where \(v,v'\) are words on the same signed letter set \(A\), and \(u,u'\) use the complementary absolute values \([n-1]\setminus|A|\).
Adjacent swaps not crossing \(\pm n\) reorder \(u\) and \(v\) independently; moreover any letter on the left of \(\pm n\) can be moved to position \(1\), flipped there, and moved back, without crossing \(\pm n\).
Thus \(w\) and \(w'\) lie in the same horizontal class. Hence horizontal classes are exactly the pairs \((\varepsilon,A)\).

\emph{Step 2: vertical covers.}
A cover is vertical iff it involves the deleted letter \(\pm n\).

If \(\varepsilon=+\), then \(+n\) is maximal, so the only adjacent-swap cover involving \(+n\) swaps it with its left neighbor:
\[
\cdots a\,(+n)\cdots\ \longrightarrow\ \cdots (+n)\,a\cdots.
\]
This moves the signed letter \(a\) from the left to the right of \(+n\), so \(A\) changes to \(A\cup\{a\}\).
Since the sign of a left letter can be chosen freely within its horizontal class, for each \(i\in[n-1]\setminus|A|\) and \(\delta\in\{+,-\}\) we obtain the edge
\((+,A)\to(+,A\cup\{\delta i\})\) as in \(1)\), and conversely each such edge is realized by choosing a representative with \(\delta i\) immediately left of \(+n\).

If \(\varepsilon=-\), then \(-n\) is minimal, so the only adjacent-swap cover involving \(-n\) swaps it with its right neighbor:
\[
\cdots (-n)\,a\cdots\ \longrightarrow\ \cdots a\,(-n)\cdots.
\]
This moves the signed letter \(a\) from the right to the left of \(-n\), hence \(A\) changes to \(A\setminus\{a\}\), giving the edges in \(2)\).

Finally, the sign-flip cover at position \(1\) is vertical exactly when the first letter is \(+n\), i.e. when there are no letters to the left of \(+n\), equivalently \(|A|=[n-1]\); it gives the edge in \(3)\).

These are all vertical covers, hence all edges of \(R(\pi)\).
\end{proof}

We now give an alternative description of \(R(\pi)\) in terms of a zonotope (see \cite{Ziegler1995} for generalities on zonotopes). Let \(F_n\) be the undirected graph with vertex set
\[
V(F_n)=\{L,R\}\sqcup [n-1]
\]
and edge set
\[
E(F_n)=\{LR\}\ \sqcup\ \{L i : i\in[n-1]\}\ \sqcup\ \{iR : i\in[n-1]\}.
\]
Let \(\mathsf{AO}(F_n)\) be the set of acyclic orientations of \(F_n\), and let \(\mathcal O_0\) be the orientation
\[
L\to R,\qquad L\to i,\qquad i\to R \qquad (i\in[n-1]).
\]

\begin{definition}\label{def:GammaFn}
Define a directed graph \(\Gamma(F_n)\) as follows. Its vertex set is \(\mathsf{AO}(F_n)\).
There is a directed edge \(\mathcal O\to \mathcal O'\) if \(\mathcal O'\) is obtained from \(\mathcal O\) by reversing
exactly one edge of \(F_n\), and that edge is directed in \(\mathcal O\) as in \(\mathcal O_0\).
\end{definition}

A single flip \(\mathcal O\to \mathcal O'\) thus increases by \(1\) the number of edges on which the orientation disagrees with \(\mathcal O_0\), so \(\Gamma(F_n)\) is acyclic. 


\newcommand{\Fthree}[6]{%
  \tikz[baseline=-0.65ex,scale=0.42,>=Stealth,
        lbl/.style={inner sep=0pt,outer sep=0pt,font=\scriptsize},
        edge/.style={line width=0.40pt,shorten <=0.8pt,shorten >=0.8pt}]%
  {%
    \node[lbl] (L)   at (0,0)        {$L$};
    \node[lbl] (R)   at (1.35,0)     {$R$};
    \node[lbl] (One) at (0.675,0.55) {$1$};
    \node[lbl] (Two) at (0.675,1.15) {$2$};

    \draw[edge,#1] (L) -- (R);
    \draw[edge,#2] (L) -- (One);
    \draw[edge,#3] (One) -- (R);
    \draw[edge,#4] (L) -- (Two);
    \draw[edge,#5] (Two) -- (R);

    \node[font=\tiny,inner sep=0pt,outer sep=0pt] at (0.675,-0.32) {$#6$};
  }%
}

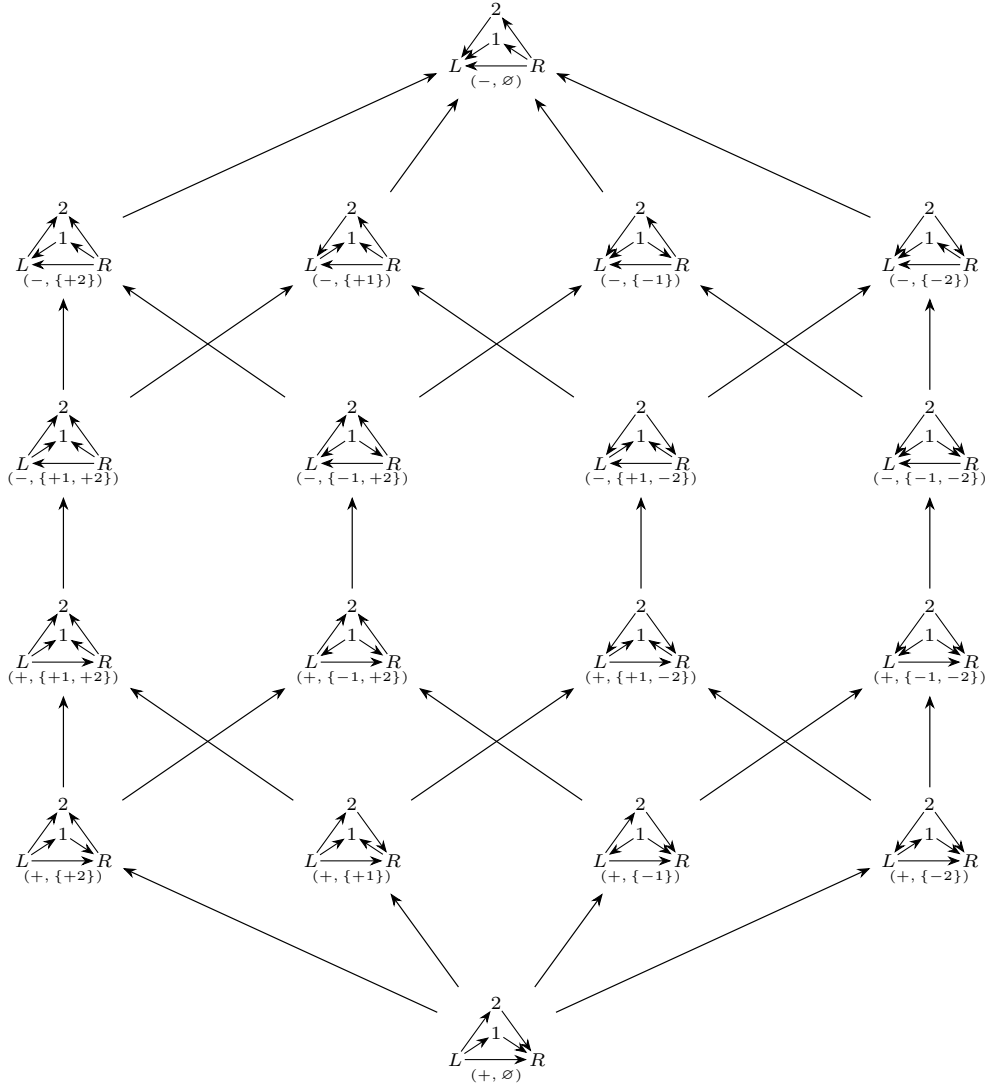
\begin{figure}[t]
\centering
\begin{tikzpicture}[x=1.9cm,y=1.55cm,>=Stealth,
  vertex/.style={inner sep=2pt},
  edge/.style={->,line width=0.45pt,shorten <=2pt,shorten >=2pt}]

\node[vertex] (r0)  at (0,0)    {\Fthree{->}{->}{->}{->}{->}{(+,\varnothing)}};   

\node[vertex] (r11) at (-3,1.7)   {\Fthree{->}{->}{->}{->}{<-}{(+,\{+2\})}};
\node[vertex] (r12) at (-1,1.7)   {\Fthree{->}{->}{<-}{->}{->}{(+,\{+1\})}};
\node[vertex] (r13) at (1,1.7)    {\Fthree{->}{<-}{->}{->}{->}{(+,\{-1\})}};
\node[vertex] (r14) at (3,1.7)    {\Fthree{->}{->}{->}{<-}{->}{(+,\{-2\})}};

\node[vertex] (r21) at (-3,1.7*2)   {\Fthree{->}{->}{<-}{->}{<-}{(+,\{+1,+2\})}};
\node[vertex] (r22) at (-1,1.7*2)   {\Fthree{->}{<-}{->}{->}{<-}{(+,\{-1,+2\})}};
\node[vertex] (r23) at (1,1.7*2)    {\Fthree{->}{->}{<-}{<-}{->}{(+,\{+1,-2\})}};
\node[vertex] (r24) at (3,1.7*2)    {\Fthree{->}{<-}{->}{<-}{->}{(+,\{-1,-2\})}};

\node[vertex] (r31) at (-3,1.7*3)   {\Fthree{<-}{->}{<-}{->}{<-}{(-,\{+1,+2\})}};
\node[vertex] (r32) at (-1,1.7*3)   {\Fthree{<-}{<-}{->}{->}{<-}{(-,\{-1,+2\})}};
\node[vertex] (r33) at (1,1.7*3)    {\Fthree{<-}{->}{<-}{<-}{->}{(-,\{+1,-2\})}};
\node[vertex] (r34) at (3,1.7*3)    {\Fthree{<-}{<-}{->}{<-}{->}{(-,\{-1,-2\})}};

\node[vertex] (r41) at (-3,1.7*4)   {\Fthree{<-}{<-}{<-}{->}{<-}{(-,\{+2\})}};
\node[vertex] (r42) at (-1,1.7*4)   {\Fthree{<-}{->}{<-}{<-}{<-}{(-,\{+1\})}};
\node[vertex] (r43) at (1,1.7*4)    {\Fthree{<-}{<-}{->}{<-}{<-}{(-,\{-1\})}};
\node[vertex] (r44) at (3,1.7*4)    {\Fthree{<-}{<-}{<-}{<-}{->}{(-,\{-2\})}};

\node[vertex] (r5)  at (0,1.7*5)    {\Fthree{<-}{<-}{<-}{<-}{<-}{(-,\varnothing)}};

\draw[edge] (r0) -- (r11);
\draw[edge] (r0) -- (r12);
\draw[edge] (r0) -- (r13);
\draw[edge] (r0) -- (r14);

\draw[edge] (r11) -- (r21);
\draw[edge] (r11) -- (r22);

\draw[edge] (r12) -- (r21);
\draw[edge] (r12) -- (r23);

\draw[edge] (r13) -- (r22);
\draw[edge] (r13) -- (r24);

\draw[edge] (r14) -- (r23);
\draw[edge] (r14) -- (r24);

\draw[edge] (r21) -- (r31);
\draw[edge] (r22) -- (r32);
\draw[edge] (r23) -- (r33);
\draw[edge] (r24) -- (r34);

\draw[edge] (r31) -- (r41);
\draw[edge] (r31) -- (r42);

\draw[edge] (r32) -- (r41);
\draw[edge] (r32) -- (r43);

\draw[edge] (r33) -- (r42);
\draw[edge] (r33) -- (r44);

\draw[edge] (r34) -- (r43);
\draw[edge] (r34) -- (r44);

\draw[edge] (r41) -- (r5);
\draw[edge] (r42) -- (r5);
\draw[edge] (r43) -- (r5);
\draw[edge] (r44) -- (r5);

\end{tikzpicture}
\caption{The directed flip graph \(\Gamma(F_3)\cong R(\pi:W_3\to W_2)\).}
\end{figure}

\begin{proposition}\label{prop:typeB-graphical}
For the deletion projection \(\pi:W_n\to W_{n-1}\), the pre-Reeb graph \(R(\pi)\) is isomorphic, as a directed graph,
to \(\Gamma(F_n)\).
In particular, \(R(\pi)\) is acyclic, hence \(\mathsf{Reeb}(\pi)\) is defined as the reachability poset of \(R(\pi)\).
\end{proposition}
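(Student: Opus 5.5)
We use the explicit description of \(R(\pi)\) from Proposition~\ref{prop:typeB-preReeb} and build a bijection \(\Phi\) from the vertex set \(\{(\varepsilon,A)\}\) to \(\mathsf{AO}(F_n)\). We then check that \(\Phi\) carries the three edge types of \(R(\pi)\) onto the single-flip edges of \(\Gamma(F_n)\). Acyclicity of \(R(\pi)\) then follows from the acyclicity of \(\Gamma(F_n)\) noted after Definition~\ref{def:GammaFn}.

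\textbf{The map \(\Phi\).} Set \(\Phi(\varepsilon,A)\) to be the orientation with:
\begin{itemize}
\item edge \(LR\) oriented \(L\to R\) if \(\varepsilon=+\), and \(R\to L\) if \(\varepsilon=-\);
\item for \(i\in[n-1]\) with \(+i\in A\): \(L\to i\) and \(R\to i\), so only the edge \(iR\) is reversed relative to \(\mathcal O_0\);
\item for \(-i\in A\): \(i\to L\) and \(i\to R\), so only the edge \(Li\) is reversed;
\item for \(i\notin|A|\): if \(\varepsilon=+\), the \(\mathcal O_0\) orientation \(L\to i\to R\); if \(\varepsilon=-\), the fully reversed orientation \(R\to i\to L\).
\end{itemize}
This matches the figure for \(n=3\).

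\textbf{Bijectivity.} First, \(\Phi(\varepsilon,A)\) is acyclic. Every cycle in \(F_n\) either passes through \(L\) and \(R\) using \(LR\) and some \(i\), or is \(L\,i\,R\,j\). A directed cycle would need some \(i\) to be a directed path between \(L\) and \(R\) in the direction opposite to the edge \(LR\). The only directed paths through an \(i\) are \(L\to i\to R\), occurring only when \(\varepsilon=+\) (consistent with \(L\to R\)), and \(R\to i\to L\), occurring only when \(\varepsilon=-\) (consistent with \(R\to L\)). A cycle \(L\,i\,R\,j\) would need paths in both directions simultaneously, which is impossible since all paths point the same way.

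Conversely, take an acyclic orientation. The sign \(\varepsilon\) is read off from \(LR\). Each \(i\) has four possible local patterns: the path \(L\to i\to R\), the path \(R\to i\to L\), a sink, or a source. Acyclicity forbids the path opposing \(LR\), so each \(i\) has exactly three allowed patterns. Reading these as "not in \(A\)", \(+i\in A\), or \(-i\in A\) inverts \(\Phi\). As a count, both sides have \(2\cdot 3^{n-1}\) elements.

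\textbf{Edges.} We check each type from Proposition~\ref{prop:typeB-preReeb}.
\begin{itemize}
\item Type 1), \(\varepsilon=+\): adding \(+i\) flips \(iR\) from \(i\to R\) to \(R\to i\), and adding \(-i\) flips \(Li\). In each case one edge is reversed from its \(\mathcal O_0\) direction.
\item Type 2), \(\varepsilon=-\): removing \(+i\) changes \(L\to i,\,R\to i\) into \(R\to i\to L\), which flips \(Li\) from its \(\mathcal O_0\) direction \(L\to i\). Removing \(-i\) changes \(i\to L,\,i\to R\) into \(R\to i\to L\), which flips \(iR\) from \(i\to R\).
\item Type 3): when \(|A|=[n-1]\), \(\Phi(+,A)\) and \(\Phi(-,A)\) differ only in \(LR\), flipped from \(L\to R\).
\end{itemize}

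Conversely, every single flip in \(\Gamma(F_n)\) from \(\mathcal O_0\)-direction must be one of these, by case analysis on which edge is flipped:
\begin{itemize}
\item Flipping \(LR\) preserves acyclicity only if no \(i\) carries the path \(L\to i\to R\). Since \(\varepsilon=+\), this means every \(i\) lies in \(|A|\), which is type 3).
\item Flipping \(Li\) or \(iR\) with \(\varepsilon=+\) must start from \(i\notin|A|\), since for \(i\in|A|\) the edge is already reversed or the flip would create the path \(R\to i\to L\), a cycle. This gives type 1).
\item With \(\varepsilon=-\), the \(\mathcal O_0\)-directed edge at \(i\) exists only for \(i\in|A|\), and flipping it yields \(R\to i\to L\). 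This gives type 2).
\end{itemize}

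The main obstacle is this converse edge check, namely making sure no extra flips exist. Doing it per local pattern of a single \(i\), as above, keeps it finite.
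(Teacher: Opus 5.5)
Your proof is correct and follows the paper's argument essentially verbatim: the same bijection \(\Phi\) (sink for \(+i\in A\), source for \(-i\in A\), the path \(T\to i\to H\) along the direction of \(LR\) for \(i\notin|A|\)), the same three-patterns-per-vertex characterization of acyclic orientations, and the same edge-by-edge check against the three edge types of \(R(\pi)\). The one place you go further is the converse check that every flip of \(\Gamma(F_n)\) away from \(\mathcal O_0\) comes from an edge of \(R(\pi)\); the paper's Step 3 leaves this direction largely implicit, and it is needed to conclude that \(\Phi\) is an isomorphism of directed graphs.
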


\begin{proof}
\emph{Step 1: acyclic orientations of \(F_n\).}
Let \(\mathcal O\) be an orientation of \(F_n\), and write \(T\to H\) for the oriented edge \(LR\)
(so \(\{T,H\}=\{L,R\}\)).
Then \(\mathcal O\) is acyclic if and only if there is no \(i\in[n-1]\) with \(H\to i\to T\). Hence, for each \(i\), exactly one of the following three patterns occurs:
\[
T\to i\to H,\qquad (T\to i\ \text{and}\ H\to i),\qquad (i\to T\ \text{and}\ i\to H).
\]

\emph{Step 2: a bijection on vertices.}
Recall from Proposition~\ref{prop:typeB-preReeb} that \(V(R(\pi))\) is parametrized by pairs \((\varepsilon,A)\).
Define a map
\[
\Phi:V(R(\pi))\longrightarrow \mathsf{AO}(F_n)
\]
by the following rules.
Orient the edge \(LR\) by
\[
\varepsilon=+\ \Rightarrow\ L\to R,\qquad \varepsilon=-\ \Rightarrow\ R\to L,
\]
and write \(T\to H\) for this oriented edge.
For each \(i\in[n-1]\), set:
\begin{itemize}
    \item if \(i\notin |A|\), orient \(T\to i\to H\);
    \item if \(+i\in A\), orient \(T\to i\) and \(H\to i\);
    \item if \(-i\in A\), orient \(i\to T\) and \(i\to H\).
\end{itemize}
This produces an acyclic orientation by Step~1.
Conversely, given \(\mathcal O\in\mathsf{AO}(F_n)\), recover \(\varepsilon\) from the direction of \(LR\), and recover \(A\) by
\[
T\to i\to H \ \Rightarrow\ \pm i\notin A,\qquad
(T\to i\ \text{and}\ H\to i)\ \Rightarrow\ +i\in A,\qquad
(i\to T\ \text{and}\ i\to H)\ \Rightarrow\ -i\in A.
\]
This is inverse to \(\Phi\), so \(\Phi\) is a bijection.

\emph{Step 3: compatibility with directed edges.}
We show that \(\Phi\) identifies the directed edges of \(R(\pi)\) with the directed flips of Definition~\ref{def:GammaFn}, i.e.\ with single flips away from \(\mathcal O_0\).

If \(x=(+,A)\), then \(LR\) is \(L\to R\). For \(i\notin|A|\), the orientation has \(L\to i\to R\), and the edges of \(R(\pi)\) are
\[
(+,A)\to(+,A\cup\{\delta i\})\qquad (\delta\in\{+,-\}).
\]
If \(\delta=+\), this reverses \(iR\); if \(\delta=-\), this reverses \(Li\). In both cases the reversed edge is originally directed as in \(\mathcal O_0\).

If \(x=(-,A)\), then \(LR\) is \(R\to L\), and the edges of \(R(\pi)\) are
\[
(-,A)\to(-,A\setminus\{a\})\qquad (a\in A).
\]
If \(a=+i\), then \(i\) is a sink and the flip reverses \(Li\); if \(a=-i\), then \(i\) is a source and the flip reverses \(iR\). Again the reversed edge is originally directed as in \(\mathcal O_0\).

Finally, the edge
\[
(+,A)\to(-,A)
\]
occurs exactly when \(|A|=[n-1]\), and then it reverses \(LR\), which is also originally directed as in \(\mathcal O_0\).

Thus \(\Phi\) is a directed-graph isomorphism \(R(\pi)\cong \Gamma(F_n)\).
\end{proof}

\begin{remark}\label{rem:zonotope}
The underlying undirected graph of \(\Gamma(F_n)\) is the \(1\)-skeleton of the graphical zonotope \(Z(F_n)\). Equivalently, \(R(\pi)\) identifies with a natural acyclic orientation of the \(1\)-skeleton of \(Z(F_n)\) by a linear functional. Yet equivalently, $R(\pi)$ is the graph of chambers of a hyperplane arrangement, oriented away from a base chamber.
\end{remark}

\subsection{Augmented pre-Reeb graph and augmented Reeb poset in type \(B\)}

Throughout this subsection, \(\pi:W_n\to W_{n-1}\) is the deletion map, and vertices of \(R(\pi)\) are the horizontal classes
\(x=(\varepsilon,A)\) from Proposition~\ref{prop:typeB-preReeb}.

\begin{definition}\label{def:typeB-bits-valuation}\label{def:typeB-valuation}\label{def:valuation}
For \(x=(\varepsilon,A)\), write \(|A|:=\{\,|a|\mid a\in A\,\}\subseteq[n-1]\).

Define the bit
\[
(-n)(x):=\mathbf{1}_{\varepsilon=-}.
\]
For each \(i\in[n-1]\) define bits \((n,i)(x),(n,-i)(x)\in\{0,1\}\) by
\[
(n,i)(x):=\mathbf{1}_{\, +i\in A}+\mathbf{1}_{\,\varepsilon=-,\ i\notin |A|},
\qquad
(n,-i)(x):=\mathbf{1}_{\, -i\in A}+\mathbf{1}_{\,\varepsilon=-,\ i\notin |A|}.
\]
(These sums are disjoint, hence take values in \(\{0,1\}\).)

If \(w\) is any representative of the vertex \(x\), then
\[
(-n)(x)=\mathbf 1_{\,(-n)\in\Inv(w)},\qquad
(n,i)(x)=\mathbf 1_{\, (n,i)\in\Inv(w)},\qquad
(n,-i)(x)=\mathbf 1_{\, (n,-i)\in\Inv(w)}.
\]

Define the valuation \( \nu \) on vertices by
\[
\nu (x):=\sum_{i=1}^{n-1} (n,-i)(x)\,2^{\,n-1-i}\;+\;(-n)(x)\,2^{\,n-1}\;+\;
\sum_{i=1}^{n-1} (n,i)(x)\,2^{\,n-1+i}.
\]
Equivalently, \(\nu (x)\) is the integer represented by the binary string
\[
(n,n\!-\!1)(x)\cdots(n,2)(x)\ (n,1)(x)\ (-n)(x)\ (n,-1)(x)\ (n,-2)(x)\cdots(n,-(n\!-\!1))(x).
\]
\end{definition}

\begin{definition}\label{def:typeB-successor}\label{def:successor}
The successor map \(s\) is a partial function on vertices, defined by the following mutually
exclusive cases.

\medskip\noindent
\textbf{A. The case \(\varepsilon=+\).}
\begin{itemize}
\item[\textbf{A1.}] If \(|A|=[n-1]\), set
\[
s(+,A)=(-,A).
\]
\item[\textbf{A2.}] If \(|A|\neq[n-1]\), let
\[
m:=\max\bigl([n-1]\setminus |A|\bigr).
\]
Set
\[
s(+,A)=\Bigl(+,\,\bigl(A\setminus\{-i:\ i > m\}\bigr)\cup\{-m\}\Bigr).
\]
\end{itemize}

\medskip\noindent
\textbf{B. The case \(\varepsilon=-\).}
Let
\[
P:=\{\,i\in |A|:\ +i\in A\,\}.
\]
\begin{itemize}
\item[\textbf{B1.}] If \(P\neq\varnothing\), let
\[
k=\max(P).
\]
Set
\[
s(-,A)=\Bigl(-,\,(A\setminus\{+k\})\cup\{+i:\ i>k,\ i\notin |A|\}\Bigr).
\]

\item[\textbf{B2.}] If \(P=\varnothing\): if \(A=\varnothing\), then \(s\) is undefined. If \(A\neq\varnothing\), let
\[
s_0:=\min(|A|),
\qquad
A':=\{+s_0\}\ \cup\ \{\, +i:\ i>s_0,\ i\notin |A|\,\}.
\]
Then set
\[
s(-,A)=(+,A').
\]
\end{itemize}
\end{definition}

By direct inspection of the cases in Definition~\ref{def:successor}, the predecessor \(x\) can be uniquely recovered from \(s(x)\) whenever \(s(x)\) is defined. Thus every vertex except \(x_{\min}:=(+,\varnothing)\) has a unique predecessor, and every vertex except \(x_{\max}:=(-,\varnothing)\) has a unique successor. As we will now show in Proposition~\ref{prop:succ_increases_valuation}, the valuation \(\nu\) strictly increases along successor edges, so the directed graph on the vertices with edges \(x\to s(x)\) has no directed cycles. It follows that this graph is a single directed path passing through all vertices exactly once.

\begin{proposition}\label{prop:typeB-successor-increases}\label{prop:succ_increases_valuation}
If \(s(x)\) is defined then \(\nu (s(x))>\nu (x)\).
\end{proposition}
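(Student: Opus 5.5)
The plan is a direct case check: for each case A1, A2, B1, B2 of Definition~\ref{def:successor}, I will compare the binary strings of \(\nu(x)\) and \(\nu(s(x))\) from Definition~\ref{def:typeB-valuation}. In each case I will locate the most significant bit where they differ and show it goes from \(0\) to \(1\). Recall the string, from most to least significant, reads
\[
(n,n\!-\!1)\cdots(n,1)\ (-n)\ (n,-1)\cdots(n,-(n\!-\!1)).
\]
I will use the explicit bit formulas: for \(\varepsilon=+\), \((n,i)=\mathbf 1_{+i\in A}\), \((n,-i)=\mathbf 1_{-i\in A}\) and \((-n)=0\). For \(\varepsilon=-\), \((-n)=1\), and a letter \(i\notin|A|\) contributes \(1\) to both \((n,i)\) and \((n,-i)\), while \(\pm i\in A\) contributes \(1\) only to \((n,\pm i)\).

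\textbf{Case A1.} Here \(|A|=[n-1]\). Passing from \(\varepsilon=+\) to \(\varepsilon=-\) leaves every \((n,\pm i)\) bit unchanged, since no \(i\) lies outside \(|A|\). It turns \((-n)\) from \(0\) to \(1\), so \(\nu\) increases by \(2^{n-1}\).

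\textbf{Case A2.} The \((n,i)\) bits (high block) are unchanged, since only negative letters are added or removed, and \((-n)\) stays \(0\). In the low block, \((n,-i)\) has weight \(2^{n-1-i}\), so smaller \(i\) is more significant. The bits at \(i<m\) are unchanged, and the bit at \(m\) flips \(0\to1\) because \(m\notin|A|\). The bits at \(i>m\) may drop, but they are less significant. Hence \(\nu\) increases.

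\textbf{Case B1.} Here \(\varepsilon=-\) is unchanged. I track \(i=k\) and \(i>k\).
\begin{itemize}
\item At \(i=k\), \(+k\) is removed, so \(k\notin|A'|\). Then \((n,k)\) stays \(1\), and \((n,-k)\) goes \(0\to1\).
\item For \(i>k\) with \(i\notin|A|\), \(+i\) is added. Then \((n,i)\) stays \(1\), and \((n,-i)\) goes \(1\to0\).
\item For \(i>k\) with \(i\in|A|\), maximality of \(k\) forces \(-i\in A\), and nothing changes.
\end{itemize}
So the high block and \((-n)\) are unchanged. In the low block, the most significant change is at index \(k\), the smallest index that changes, and it goes \(0\to1\). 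Hence \(\nu\) increases.

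\textbf{Case B2.} Here all letters of \(A\) are negative, so for \(x\) the high bits satisfy \((n,i)(x)=\mathbf 1_{i\notin|A|}\). For \(s(x)=(+,A')\), we have \((n,i)=\mathbf 1_{+i\in A'}\), which equals \(1\) exactly when \(i=s_0\), or when \(i>s_0\) and \(i\notin|A|\).
\begin{itemize}
\item For \(i>s_0\) the high bits agree.
\item At \(i=s_0\), we have \(s_0\in|A|\), so the bit goes \(0\to1\).
\end{itemize}
Since \((n,i)\) has weight \(2^{n-1+i}\), larger \(i\) is more significant, and all lower bits are dominated. Hence \(\nu\) increases.

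The main obstacle is bookkeeping: the two blocks have opposite significance orders, and for \(\varepsilon=-\) the missing letters carry ones in both blocks. I expect B2 to be the delicate case, because the sign \(\varepsilon\) flips, which drops the \((-n)\) bit and changes the interpretation of every \(i\notin|A|\). The argument there must rely on the high block alone.
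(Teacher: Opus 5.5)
Your proposal is correct and follows the paper's proof essentially verbatim. Both run a case check over A1, A2, B1, B2, locate the most significant changed bit of the valuation string, and verify that it flips $0\to1$: this bit is $(-n)$ in A1, $(n,-m)$ in A2, $(n,-k)$ in B1, and $(n,s_0)$ in B2, with the same use of maximality of $k$ in B1 and of $P=\varnothing$ in B2.
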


\begin{proof}
Write \(y=s(x)\). In each case we identify the leftmost bit in the valuation string
\[
(n,n\!-\!1)\cdots(n,1)\ (-n)\ (n,-1)\cdots(n,-(n\!-\!1))
\]
that changes from \(x\) to \(y\), and check that it flips \(0\to1\).

\medskip\noindent
\textbf{Case A: \(\varepsilon=+\).}
\begin{itemize}
\item[\textbf{A1.}] \(|A|=[n-1]\). Only the bit \((-n)\) changes, from \(0\) to \(1\); all \((n,i)\) and \((n,-i)\) are unchanged. Hence \(\nu(y)>\nu(x)\).

\item[\textbf{A2.}] \(|A|\neq[n-1]\), and \(m=\max([n-1]\setminus|A|)\). Then \((-n)\) and all \((n,i)\) are unchanged. Also \((n,-m)(x)=0\) and \((n,-m)(y)=1\), since \(x\) has neither \(\pm m\) while \(y\) contains \(-m\). For each \(i>m\) with \(-i\in A\), the passage \(x\mapsto y\) removes \(-i\), so \((n,-i)\) flips \(1\to0\); these bits are less significant than \((n,-m)\). Thus the leftmost changed bit is \((n,-m)\), and it flips \(0\to1\). Hence \(\nu(y)>\nu(x)\).
\end{itemize}

\medskip\noindent
\textbf{Case B: \(\varepsilon=-\).}
\begin{itemize}
\item[\textbf{B1.}] \(P\neq\varnothing\), and \(k=\max(P)\). All \((n,i)\) bits are unchanged. Since \(+k\in A\), we have \((n,-k)(x)=0\); after removing \(+k\) from \(A\) we get \((n,-k)(y)=1\). For each \(i>k\) with \(i\notin|A|\), the passage from \(x\) to \(y\) adds \(+i\) to \(A\), so \((n,-i)\) flips \(1\to0\); these bits are less significant than \((n,-k)\). Thus the leftmost changed bit is \((n,-k)\), and it flips \(0\to1\). Hence \(\nu(y)>\nu(x)\).

\item[\textbf{B2.}] \(P=\varnothing\) and \(A\neq\varnothing\). Let \(s_0=\min(|A|)\). Then \(y\) has \(\varepsilon=+\). We claim
\[
(n,i)(y)=(n,i)(x)\ \text{for all }i>s_0,\qquad (n,s_0)(x)=0,\ (n,s_0)(y)=1.
\]
Indeed, if \(i>s_0\) and \(i\notin|A|\), then \((n,i)(x)=1\) and \(+i\in A_y\), so \((n,i)(y)=1\). If \(i>s_0\) and \(i\in|A|\), then \(P=\varnothing\) forces \(-i\in A\), so \((n,i)(x)=0\), and \(i\notin|A_y|\), so \((n,i)(y)=0\). Finally, \(-s_0\in A\) gives \((n,s_0)(x)=0\), while \(+s_0\in A_y\) gives \((n,s_0)(y)=1\). Thus the leftmost changed bit is \((n,s_0)\), and it flips \(0\to1\). Hence \(\nu(y)>\nu(x)\).
\end{itemize}
\end{proof}

The next lemma follows from the work of Reading \cite{Reading2003}. We give an explicit proof of it, to keep the current paper self-contained.

\begin{lemma}\label{lem:typeB-Lemma2.2}
If \(x\to y\) is an edge of \(\widehat R(\pi)\), then \(\nu(y)>\nu(x)\).
\end{lemma}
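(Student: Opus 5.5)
Following the type \(A\) argument (Lemma~\ref{lem:typeA-nu-decreases}), I will split the edges of \(\widehat R(\pi)\) into edges of \(R(\pi)\) and auxiliary edges. In both cases the target is the same: the leftmost bit of the valuation string that differs between \(x\) and \(y\) goes from \(0\) to \(1\).

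\textbf{Edges of \(R(\pi)\).} I use the three families of Proposition~\ref{prop:typeB-preReeb} together with the bit formulas of Definition~\ref{def:typeB-bits-valuation}.
\begin{itemize}
\item An edge \((+,A)\to(+,A\cup\{\delta i\})\) changes exactly one bit, \((n,\delta i)\), from \(0\) to \(1\).
\item An edge \((-,A)\to(-,A\setminus\{a\})\) with \(a=\pm i\) turns \(i\) into an element outside \(|A|\) while \(\varepsilon=-\). Then \((n,i)=(n,-i)=1\) afterwards. Before the move exactly one of them was \(1\), so exactly one bit rises.
\item The edge \((+,A)\to(-,A)\) with \(|A|=[n-1]\) only raises \((-n)\).
\end{itemize}
In each case \(\nu\) strictly increases. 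Equivalently, every vertical cover adds exactly one inversion of the form \((n,\cdot)\) or \((-n)\), and no inversion is lost. This matches the identity between the bits and \(\Inv(w)\) recorded in Definition~\ref{def:typeB-bits-valuation}.

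\textbf{Auxiliary edges.} Take witnesses \(v\in x\) and \(w\in y\) with \(\pi(w)<\pi(v)\) and \(v,w\) incomparable.
\begin{itemize}
\item By Lemma~\ref{lem:typeB-weak-inversions}, \(\Inv(\pi(w))\subseteq\Inv(\pi(v))\).
\item The inversions of \(v\) not involving \(n\) are exactly those of \(\pi(v)\), because the rules of Definition~\ref{lem:typeB-two-letter-setup} for pairs \(i<j<n\) only look at relative positions of \(\pm i,\pm j\). The same holds for \(w\).
\item So incomparability forces \(\Inv(w)\not\subseteq\Inv(v)\) on the \(n\)-bits, i.e. some bit is \(1\) for \(y\) and \(0\) for \(x\). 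In particular \(\nu(x)\neq\nu(y)\).
\end{itemize}
Now let \(t\) be the leftmost differing bit, and suppose for contradiction that it is \(1\) for \(x\) and \(0\) for \(y\). There must then be a less significant bit \(t'\) that is \(0\) for \(x\) and \(1\) for \(y\). The goal is to build, from \(t\) and \(t'\), a pair inversion of \(W_{n-1}\) that lies in \(\Inv(\pi(w))\setminus\Inv(\pi(v))\), contradicting \(\pi(w)\le\pi(v)\). This is the analogue of the \((t,j)\) argument in type \(A\).

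\textbf{Case analysis for the contradiction.} The case split follows the positions of \(t\) and \(t'\) in the string
\[
(n,n-1)\cdots(n,1)\,(-n)\,(n,-1)\cdots(n,-(n-1)).
\]
Let \(a\) and \(b\) be the indices carried by \(t\) and \(t'\).
\begin{itemize}
\item \emph{Both in the \((n,i)\) block.} Here \(t=(n,a)\), \(t'=(n,b)\) with \(a>b\). Translate each bit into the relative position of the signed letter \(\pm a\) or \(\pm b\) with respect to \(\pm n\), separately for \(\varepsilon=\pm\). The aim is to show that the pair symbol \((a,b)\) or \((a,-b)\) holds in \(\pi(w)\) but fails in \(\pi(v)\).
\item \emph{Both in the \((n,-i)\) block.} Here \(t=(n,-a)\), \(t'=(n,-b)\) with \(a<b\). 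This is handled by the symmetric argument.
\item \emph{Mixed cases, or \((-n)\) involved.} This includes \(t\) in the positive block and \(t'\) in the negative block, and either of them equal to \((-n)\). Here the sign \(\varepsilon\) differs between \(v\) and \(w\) or the letters differ in sign. I would use the sign-flip inversion \((-b)\), or pair symbols \((a,-b)\), to produce the witness.
\end{itemize}

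\textbf{Main obstacle.} The hard step is the mixed-block case analysis. The rules of Definition~\ref{lem:typeB-two-letter-setup} are disjunctions, so many subcases arise. To keep this manageable I would first tabulate the position of \(\pm n\) relative to \(\pm i\) implied by each bit pattern \(((n,i),(n,-i))\in\{00,10,01,11\}\) for each sign \(\varepsilon\). I would then read off the needed pair inversion from this table. If that becomes unwieldy, the fallback is Reading's approach: realize \(\nu\) as a weighted sum of inversions with exponential weights in a fixed reflection order, and check only that the order is compatible, as in \cite{Reading2003}.
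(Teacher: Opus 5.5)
Your proposal follows the paper's proof in outline. Edges of \(R(\pi)\) add exactly one \(n\)-inversion. For an auxiliary edge, one assumes the leftmost differing bit goes \(1\to 0\), takes a less significant bit going \(0\to 1\), and extracts an element of \(\Inv(\pi(w))\setminus\Inv(\pi(v))\). Your block combinations are exactly the paper's Cases A (negative/negative), B (\((-n)\)/negative), C1 (positive/positive), C2 (positive/\((-n)\)) and C3 (positive/negative). The case checks you defer go through with the witnesses you anticipate: the pair symbol with the larger index first in the two same-block cases (in the negative block, the agreeing \((-n)\) bit guarantees \(\pm n\) has the same sign in \(v\) and \(w\)); the one-letter inversion \((-j)\) in Cases B and C2 and in the subcase \(t=(n,j)\), \(t'=(n,-j)\) of C3; and \((\max\{i,j\},-\min\{i,j\})\) in the rest of C3. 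No fallback to Reading's argument is needed.
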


\begin{proof}
We split into edges of \(R(\pi)\) and auxiliary edges.

\medskip\noindent
\textbf{Step 1: edges of \(R(\pi)\).}
By the paragraph after Definition~\ref{def:typeB-valuation}, the bitstring of a vertex is exactly the bitstring of \(n\)-inversions of any representative:
\[
(n,n\!-\!1),\dots,(n,1),(-n),(n,-1),\dots,(n,-(n\!-\!1)).
\]
A vertical cover is of one of two kinds: either it swaps \(\pm n\) with one adjacent letter, in which case it changes exactly one \(n\)-inversion and no other \(n\)-inversions; or it flips \(+n\) to \(-n\) at the first position, in which case it changes only the inversion \((-n)\). Therefore every edge of \(R(\pi)\) strictly increases \(\nu\).

\medskip\noindent
\textbf{Step 2: auxiliary edges.}
Let \(x\to y\) be an auxiliary edge. Fix witnesses \(u\in x\) and \(w\in y\) with
\(\pi(w)<\pi(u)\) in \(W_{n-1}\) and \(u,w\) incomparable in \(W_n\).
The bitstrings of \(x\) and \(y\) are the bitstrings of \(n\)-inversions of \(u\) and \(w\). Let \(\lambda\) be the most significant position where these bitstrings differ.

We show that at \(\lambda\) the entry changes from \(0\) for \(u\) to \(1\) for \(w\).
Assume for contradiction that at \(\lambda\) it changes from \(1\) for \(u\) to \(0\) for \(w\).
Since \(\pi(w)\le\pi(u)\), every inversion of \(\pi(w)\) is an inversion of \(\pi(u)\). Hence the incomparability of \(u,w\) must come from the \(n\)-inversions, so there exists a later position \(\mu\) where the entry changes from \(0\) for \(u\) to \(1\) for \(w\). Let \(\mu\) be the most significant such position.

\medskip\noindent
\textbf{Case A: \(\lambda=(n,-i)\).}
Then \(\mu=(n,-j)\) for some \(j>i\). Since \(\lambda\) lies in the \((n,-\cdot)\)-block, all \((n,k)\) and \((-n)\) agree for \(u\) and \(w\).

\[
\begin{array}{c|cc}
 & \lambda=(n,-i) & \mu=(n,-j)\\
\hline
u & 1 & 0\\
w & 0 & 1
\end{array}
\]

If both \(u\) and \(w\) contain \(+n\), then the values of \(\lambda\) and \(\mu\) give:
\begin{itemize}
\item for \(u\), \( (+n \prec_u -i) \) and \( ( \pm j \prec_u +n \text{ or } +n \prec_u +j )\);
\item for \(w\), \( ( \pm i \prec_w +n \text{ or } +n \prec_w +i )\) and \( (+n \prec_w -j) \).
\end{itemize}

We know all more significant bits agree, so \( +n \prec_u -i \implies (n,i) \notin \Inv(u) \implies (n,i) \notin \Inv(w) \), thus \( +n \prec_w +i\) is impossible and we have \( \pm i \prec_w +n \). Hence \( \pm i \prec_w - j \) and \((j,i) \in \Inv(\pi(w))\).
Similarly, \( +n \prec_w -j \implies (n,j) \notin \Inv(w) \implies (n,j) \notin \Inv(u) \), thus \( +n \prec_u +j\) is impossible and we have \( \pm j \prec_u +n \). Hence \( \pm j \prec_u -i\), and \((j,i) \notin \Inv(\pi(u))\), contradicting \(\pi(w) < \pi(u)\). 

\smallskip
If both \(u\) and \(w\) contain \(-n\), then the values of \(\lambda\) and \(\mu\) give:
\begin{itemize}
\item for \(u\), \( ( \pm i \prec_u -n \text{ or } -n \prec_u -i )\) and \( ( -n \prec_u +j )\);
\item for \(w\), \( ( -n \prec_w +i )\) and \( ( \pm j \prec_w -n \text{ or } -n \prec_w -j )\).

We know all more significant bits agree, so \( -n \prec_u +j \implies (n,j) \in \Inv(u) \implies (n,j) \in \Inv(w) \), thus \( -n \prec_w -j\) is impossible and we have \( \pm j \prec_w -n \). Hence \( \pm j \prec_w +i \), and \((j,i) \in \Inv(\pi(w))\). Similarly, \( -n \prec_w +i \implies (n,i) \in \Inv(w) \implies (n,i) \in \Inv(u) \), thus \( -n \prec_u -i\) is impossible and we have \( \pm i \prec_u -n \). Hence \( \pm i \prec_u +j \), and \((j,i) \notin \Inv(\pi(u))\), contradicting \(\pi(w) < \pi(u)\).
\end{itemize}

\medskip\noindent
\textbf{Case B: \(\lambda=(-n)\).}
Then \(\mu=(n,-j)\). 
\[
\begin{array}{c|cc}
 & \lambda=(-n) & \mu=(n,-j)\\
\hline
u & 1 & 0\\
w & 0 & 1
\end{array}
\]

Here \(u\) contains \(-n\) and \(w\) contains \(+n\). The condition \((n,-j)\in\Inv(w)\) gives \(+n\prec_w -j \), while \((n,-j)\notin\Inv(u)\) gives \(-n\prec_u +j\). Hence \((-j)\in\Inv(\pi(w))\) but \((-j)\notin\Inv(\pi(u))\), contradicting \(\pi(w) < \pi(u)\).

\medskip\noindent
\textbf{Case C: \(\lambda=(n,j)\).}
We distinguish three possibilities for \(\mu\).

\begin{itemize}
\item[\textbf{C1.}] \( \mu = (n,i)\), for some \( i<j \).

\[
\begin{array}{c|cc}
 & \lambda=(n,j) & \mu=(n,i)\\
\hline
u & 1 & 0\\
w & 0 & 1
\end{array}
\]

We analyze possible patterns in \(w\), depending on sign of \(n\).

\begin{itemize}
    \item If \( w \) contains \(+n \), then \( (\pm j \prec_w +n \text{ or } +n \prec_w -j )\) and \( (+n \prec_w +i )\). Thus either \(\pm j \prec_w +i\), or \(w\) contains \(+i\) and \(-j\), and in both cases \((j,i) \in \Inv(\pi(w))\).
    \item If \( w \) contains \( -n\), then \( (-n \prec_w -j) \) and \( (  \pm i \prec_w -n  \text{ or }-n \prec_w +i)\). Thus either \(\pm i \prec_w -j\), or \(w\) contains \(+i\) and \(-j\), and still in both cases \( (j,i) \in \Inv(\pi(w))\).
\end{itemize}

We now analyze possible patterns in \(u\), depending on sign of \(n\).

\begin{itemize}
     \item If \( u \) contains \(+n \), then \((+n \prec_u +j)\) and \( ( \pm i \prec_u +n \text{ or } +n \prec_u -i )\). Thus either \( \pm i \prec_u +j \), or \(u\) contains \(+j\) and \(-i\), and in both cases \((j,i) \notin \Inv(\pi(u))\).
    \item If \( u \) contains \( -n\), then \( (\pm j \prec_u -n  \text{ or } -n \prec_u +j )\) and \( (-n \prec_u -i )\). Thus either \(\pm j \prec_u -i\), or \(u\) contains \(+j\) and \(-i\), and still in both cases \((j,i) \notin \Inv(\pi(u))\).
\end{itemize}

So \((j,i) \in \Inv(\pi(w)) \) and \((j,i) \notin \Inv(\pi(u)) \), contradicting \(\pi(w) < \pi(u)\).

\item[\textbf{C2.}] \(\mu = (-n)\).
\[
\begin{array}{c|cc}
 & \lambda=(n,j) & \mu=(-n)\\
\hline
u & 1 & 0\\
w & 0 & 1
\end{array}
\]

Then \(u\) contains \(+n\), so it has \(+n \prec_u +j\), and \(w\) contains \(-n\), so it has \(-n \prec_w -j\). Thus \( (-j) \in \Inv(\pi(w)) \) and \( (-j) \notin \Inv(\pi(u)) \), contradicting \(\pi(w) < \pi(u)\).

\item[\textbf{C3.}] \(\mu = (n,-i)\), relation between \(i\) and \(j\) unknown.
\[
\begin{array}{c|cc}
 & \lambda=(n,j) & \mu=(n,-i)\\
\hline
u & 1 & 0\\
w & 0 & 1
\end{array}
\]
We analyze possible patterns in \(w\), depending on sign of \(n\).

\begin{itemize}
    \item If \( w \) contains \(+n \), then \( (\pm j \prec_w +n \text{ or } +n \prec_w -j )\) and \( (+n \prec_w -i )\). Thus either \( \pm j \prec_w -i\), or \(w\) contains \(-i\) and \(-j\), and in both cases \((\max\{i,j\},-\min\{i,j\}) \in \Inv(\pi(w))\).
    \item If \( w \) contains \( -n\), then \( (-n \prec_w -j) \) and \( (  \pm i \prec_w -n  \text{ or }-n \prec_w -i)\). Thus either \( \pm i \prec_w -j\), or \(w\) contains \(-i\) and \(-j\), and still in both cases \( (\max\{i,j\},-\min\{i,j\}) \in \Inv(\pi(w))\).
\end{itemize}

We now analyze possible patterns in \(u\), depending on sign of \(n\).

\begin{itemize}
    \item If \( u \) contains \(+n \), then \( (+n \prec_u +j)\) and \( ( \pm i \prec_u +n \text{ or } +n \prec_u +i )\). Thus either \( \pm i \prec_u +j\), or \(u\) contains \(+i\) and \(+j\), and in both cases \((\max\{i,j\},-\min\{i,j\}) \notin \Inv(\pi(u))\).
    \item If \( u \) contains \( -n\), then \( (\pm j \prec_u -n  \text{ or } -n \prec_u +j )\) and \( (-n \prec_u +i )\). Thus either \( \pm j \prec_u +i\), or \(u\) contains \(+i\) and \(+j\), and still in both cases \((\max\{i,j\},-\min\{i,j\}) \notin \Inv(\pi(u))\).
\end{itemize}

So \( (\max\{i,j\},-\min\{i,j\})  \in \Inv(\pi(w)) \) and \( (\max\{i,j\},-\min\{i,j\})  \notin \Inv(\pi(u)) \), contradicting \(\pi(w) < \pi(u)\). 
\end{itemize}

We have exhausted all possibilities. Hence the first differing position \(\lambda\) cannot change from \(1\) for \(u\) to \(0\) for \(w\). Therefore it changes from \(0\) for \(u\) to \(1\) for \(w\), and so \(\nu(y)>\nu(x)\).

\end{proof}
By Lemma~\ref{lem:typeB-Lemma2.2}, every edge of \(\widehat R(\pi)\) strictly increases \(\nu \); hence \(\widehat R(\pi)\) is acyclic and \(\widehat{\mathsf{Reeb}}(\pi)\) is defined.

\begin{proposition}\label{prop:typeB-auxiliary-successor}
For every vertex \(x\neq x_{\max}\), the directed edge
\(
x \longrightarrow s(x)
\)
is an edge of \(\widehat R(\pi)\).
\end{proposition}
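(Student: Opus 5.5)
We split along the cases of Definition~\ref{def:successor}. In some cases the edge \(x\to s(x)\) is already an edge of \(R(\pi)\); in the others we exhibit an auxiliary edge. By Proposition~\ref{prop:typeB-preReeb}, case A1 is the vertical edge \((+,A)\to(-,A)\) with \(|A|=[n-1]\). Case A2 with no \(-i\in A\) for \(i>m\) is the edge \((+,A)\to(+,A\cup\{-m\})\). Case B1 with no \(i>k\) outside \(|A|\) is the edge \((-,A)\to(-,A\setminus\{+k\})\). All remaining cases will be handled as auxiliary edges: A2 when some \(-i\in A\) with \(i>m\) is removed, B1 when some \(+i\) with \(i>k\) is added, and all of B2. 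Recall that auxiliary edges point from \(x\) to \(y\) when there are witnesses \(u\in x\), \(w\in y\) with \(\pi(w)<\pi(u)\) and \(u,w\) incomparable.

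\emph{Incomparability.} For the remaining cases, incomparability of any \(u\in x\) and \(w\in s(x)\) should come for free from the bit computations in the proof of Proposition~\ref{prop:succ_increases_valuation}.
\begin{itemize}
\item The leading changed bit flips \(0\to 1\). This gives an \(n\)-inversion of \(w\) not in \(u\), so \(w\not\le u\) by Lemma~\ref{lem:typeB-weak-inversions}.
\item In each remaining case some less significant bit flips \(1\to 0\). In A2 and B1 these are the bits \((n,-i)\) for the removed or added letters with \(i>m\) (resp.\ \(i>k\)). In B2 it is \((n,-s_0)\), which is \(1\) for \(x\) because \(-s_0\in A\), and \(0\) for \(y\) because \(+s_0\in A'\) and \(\varepsilon=+\). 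So \(u\not\le w\).
\end{itemize}
Hence the whole task reduces to choosing representatives with \(\pi(w)<\pi(u)\) in \(W_{n-1}\).

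\emph{Choice of representatives.} In analogy with Proposition~\ref{prop:typeA-critical-aux}, we take \(u\) as high as possible in its horizontal class and \(w\) as low as possible in its class. The horizontal class \((\varepsilon,A)\) allows arbitrary order and arbitrary signs on the left of \(\pm n\), and arbitrary order (with fixed signs) on the right.
\begin{itemize}
\item For \(u\): make all left letters negative and arrange each block in decreasing signed order. This mimics the longest element \((-1)(-2)\cdots\) restricted to the blocks.
\item For \(w\): make all left letters positive and arrange each block in increasing signed order.
\end{itemize}
We then check \(\Inv(\pi(w))\subseteq\Inv(\pi(u))\) symbol by symbol, using Definition~\ref{lem:typeB-two-letter-setup}.
\begin{itemize}
\item The symbols \((-i)\) of \(\pi(w)\) come only from negative letters in the right block of \(w\). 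In each case these letters are also negative letters in \(u\) (possibly on the left), since \(u\) negates everything on its left.
\item Pairs inside a block of \(w\) contribute few inversions, because that block is increasing.
\item The cross-block pairs \((j,\pm i)\) need a separate check.
\end{itemize}
Strictness of the inclusion follows once \(\pi(u)\neq\pi(w)\), which should be visible from a single differing letter or sign (for instance \(\pm m\) in A2, \(\pm k\) in B1, \(\pm s_0\) in B2).

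\emph{Main obstacle.} The main obstacle is the cross-block verification, because the right block of \(w\) can contain letters carrying negative signs, or letters that lie on a different side in \(u\). Concretely:
\begin{itemize}
\item In A2, the letters \(i>m\) with \(-i\in A\) move from the right of \(+n\) in \(u\) to the left in \(w\).
\item In B1, the letters \(i>k\) outside \(|A|\) are added to the right block with sign \(+\).
\item In B2, \(\varepsilon\) switches, so the left/right roles interact with the sign flip of \(n\).
\end{itemize}
Case B2 is the most delicate. There I would first try the choices above and enumerate the pairs \((i,j)\) according to whether each of \(i,j\) is \(<s_0\), \(=s_0\), or \(>s_0\) (resp.\ relative to \(m\) or \(k\) in the other cases). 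If some pair fails, I would instead adjust the order within the right block of \(w\), putting the larger absolute values first, so that the only inversions of \(\pi(w)\) are forced ones that \(\pi(u)\) also has.
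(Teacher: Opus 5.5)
Your case split, your identification of A1, A2 with $N=\varnothing$ and B1 with $M=\varnothing$ as edges of $R(\pi)$, and your incomparability argument are correct and agree with the paper. \textbf{The gap is that you never prove $\pi(w)<\pi(u)$ in any of the three remaining cases} (A2 with some $-i\in A$, $i>m$; B1 with some $i>k$ outside $|A|$; B2). This step is the whole substance of the proposition. As you observe, incomparability holds for \emph{every} pair $u\in x$, $w\in s(x)$, simply from the bitstrings. So the edge is auxiliary exactly when some pair satisfies $\pi(w)<\pi(u)$. That is the step you call the ``main obstacle'' and leave open, with a contingency (``if some pair fails, adjust the right block of $w$'') in place of an argument. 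As written, no case is proved.

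Your fallback could never help, and your choice is already optimal. The image under $\pi$ of a horizontal class $(\varepsilon,A)$ is a coset $vW_J$ of the standard parabolic subgroup $W_J\cong W_\ell\times S_{n-1-\ell}$, with $\ell=n-1-|A|$, acting on the first $\ell$ positions and on the last $|A|$ positions. Such a coset has a least and a greatest element in weak order. Your $\pi(u)$ is the greatest element of $\pi(x)$ and your $\pi(w)$ is the least element of $\pi(s(x))$. Hence your pair works if and only if some pair works, and what is missing is just the verification.

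The verification does go through. In B2, write $A=\{-s_0\}\sqcup\{-d: d\in D\}$ and $C=\{i>s_0: i\notin|A|\}$.
\begin{itemize}
\item Your $\pi(w)$ has only positive letters, so $\Inv(\pi(w))$ consists of the symbols $(j,i)$, $i<j$, with $j$ to the left of $i$.
\item Both blocks of $w$ are increasing, which forces $j\in D$ and $i\in\{s_0\}\cup C$.
\item In $\pi(u)$ every letter is negative and $-i$ precedes $-j$, so $(j,i)\in\Inv(\pi(u))$.
\item Strictness comes from the symbols $(-i)$, which lie in $\Inv(\pi(u))$ but not in $\Inv(\pi(w))$.
\end{itemize}
A2 and B1 need analogous but longer case lists over the letter types.

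The paper shortens this work by choosing non-extremal witnesses adapted to each move. In B1 it takes $u=\mathrm{inc}(C)\,\mathrm{inc}(M)\,(-n)\,(+k)\,\alpha$ and $w=\mathrm{inc}(C)\,(+k)\,(-n)\,\mathrm{inc}(M)\,\alpha$. Then $\pi(u)$ is reached from $\pi(w)$ by sliding $+k$ to the right across the letters of $M$. In B2 it writes down an explicit chain of covers. Only A2 is checked through inversion sets.
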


\begin{proof}
Fix \(x=(\varepsilon,A)\neq x_{\max}\), and write \(y=s(x)\).

Comparing Definition~\ref{def:successor} with Proposition~\ref{prop:typeB-Reeb-graph}, we see:
\begin{itemize}
\item in case \textup{A1}, the edge \(x\to y\) is exactly edge type \textup{(3)} of \(R(\pi)\);
\item in case \textup{A2}, if
\[
N:=\{\,i>m:\ -i\in A\,\}=\varnothing,
\qquad m=\max([n-1]\setminus |A|),
\]
then \(y=(+,A\cup\{-m\})\), so \(x\to y\) is edge type \textup{(1)} of \(R(\pi)\);
\item in case \textup{B1}, if
\[
M:=\{\,i>k:\ i\notin |A|\,\}=\varnothing,
\qquad k=\max\{\,i\in |A|:\ +i\in A\,\},
\]
then \(y=(-,A\setminus\{+k\})\), so \(x\to y\) is edge type \textup{(2)} of \(R(\pi)\).
\end{itemize}
Hence it remains to treat nonspecial \textup{A2}, nonspecial \textup{B1}, and \textup{B2}. In each case we exhibit \(u\in x\) and \(w\in y\) such that \(\pi(w)<\pi(u)\) in \(W_{n-1}\) and \(u,w\) are incomparable in \(W_n\). This shows that \(x\to y\) is auxiliary.

\smallskip\noindent
\textbf{Case \textup{A2}, \(N\neq\varnothing\).}
Let
\[
P:=\{\,i:\ +i\in A\,\},\qquad
Q:=\{\,i<m:\ -i\in A\,\},\qquad
C:=[m-1]\setminus |A|.
\]
Write \(\mathrm{inc}\) and \(\mathrm{dec}\) for increasing and decreasing order in the signed order. Set
\[
u=\mathrm{inc}(C)\,(-m)\,(+n)\,\mathrm{inc}(P)\,\mathrm{dec}(-N)\,\mathrm{dec}(-Q),
\]
\[
w=\mathrm{inc}(C)\,\mathrm{inc}(N)\,(+n)\,(-m)\,\mathrm{inc}(P)\,\mathrm{dec}(-Q).
\]
Then \(u\in x\) and \(w\in y\).

For incomparability, for every \(i\in N\) we have \((n,-i)\in\Inv(u)\setminus\Inv(w)\), while \((n,-m)\in\Inv(w)\setminus\Inv(u)\).

Also \(\pi(w)<\pi(u)\): every inversion of \(\pi(w)\) is still an inversion of \(\pi(u)\), and \(\pi(u)\) has the extra one-letter inversions \((-i)\) for \(i\in N\). Indeed, if \(i\in N\), then inversions of \(\pi(w)\) involving \(+i\) remain inversions in \(\pi(u)\): with \(-m\) by the condition \(\pm m\prec -i\), with \(+p\) (\(p\in P\)) by \(\pm p\prec -i\), and with \(-q\) (\(q\in Q\)) by \(\pm i\prec -q\). All other relative orders are unchanged. Thus \(\Inv(\pi(w))\subsetneq \Inv(\pi(u))\).

\smallskip\noindent
\textbf{Case \textup{B1}, \(M\neq\varnothing\).}
Let
\[
C:=[k-1]\setminus |A|,
\]
and let \(\alpha\) be any word on the signed letters of \(A\setminus\{+k\}\). Set
\[
u=\mathrm{inc}(C)\,\mathrm{inc}(M)\,(-n)\,(+k)\,\alpha,
\qquad
w=\mathrm{inc}(C)\,(+k)\,(-n)\,\mathrm{inc}(M)\,\alpha.
\]
Then \(u\in x\) and \(w\in y\).

For incomparability, for every \(i\in M\) we have \((n,-i)\in\Inv(u)\setminus\Inv(w)\), while \((n,-k)\in\Inv(w)\setminus\Inv(u)\).

Also \(\pi(w)<\pi(u)\): after deleting \(\pm n\), one gets \(\pi(u)\) from \(\pi(w)\) by successively moving \(+k\) to the right across the letters \(+i\) with \(i\in M\), using the covers
\[
(+k)(+i)\longmapsto(+i)(+k)\qquad (i>k).
\]

\smallskip\noindent
\textbf{Case \textup{B2}.}
Write
\[
s_0:=\min(|A|),\qquad
A=\{-s_0\}\sqcup\{-d:\ d\in D\},
\qquad
C:=\{\,i>s_0:\ i\notin |A|\,\}.
\]
Then
\[
y=(+,A'),\qquad A'=\{+s_0\}\sqcup\{+i:\ i\in C\}.
\]
Set
\[
u=(1\,2\,\cdots\,s_0\!-\!1)\,\mathrm{inc}(C)\,(-n)\,(-s_0)\,\mathrm{inc}(-D),
\]
\[
w=\mathrm{inc}(-D)\,(1\,2\,\cdots\,s_0\!-\!1)\,(+n)\,(+s_0)\,\mathrm{inc}(C).
\]
Then \(u\in x\) and \(w\in y\).

For incomparability, \((n,-s_0)\in\Inv(u)\setminus\Inv(w)\), while \((n,s_0)\in\Inv(w)\setminus\Inv(u)\).

Also \(\pi(w)<\pi(u)\): starting from
\[
\pi(w)=\mathrm{inc}(-D)\,(1\,2\,\cdots\,s_0\!-\!1)\,(+s_0)\,\mathrm{inc}(C),
\]
first move the block \(\mathrm{inc}(-D)\) to the far right using covers \((-d)a\mapsto a(-d)\), then move \(+s_0\) to the first position using covers \(i\,s_0\mapsto s_0\,i\) for \(i<s_0\), then flip the first letter \(+s_0\mapsto -s_0\), and finally move \(-s_0\) right across \(1,2,\dots,s_0-1\) and \(\mathrm{inc}(C)\) using covers \((-s_0)a\mapsto a(-s_0)\). The result is exactly \(\pi(u)\).

Thus in every remaining case \(x\to y\) is auxiliary. Therefore \(x\to s(x)\in\widehat R(\pi)\) for every \(x\neq x_{\max}\).
\end{proof}

\begin{corollary}\label{cor:typeB-augreeb-total-order}
The augmented Reeb poset \(\widehat{\mathsf{Reeb}}(\pi)\) is the total order induced by \(\nu\):
\[
x \le_{\widehat{\mathsf{Reeb}}(\pi)} y \quad\Longleftrightarrow\quad \nu(x)\le \nu(y).
\]
\end{corollary}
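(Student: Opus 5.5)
The plan mirrors the proof of Corollary~\ref{cor:typeA-augreeb-chain} in type \(A\). There are two implications. For the forward direction, suppose \(x\rightsquigarrow y\) is a directed path in \(\widehat R(\pi)\). Lemma~\ref{lem:typeB-Lemma2.2} says that every edge along the path strictly increases \(\nu\), so \(\nu(x)\le\nu(y)\), with equality only for the trivial path.

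For the converse, we use the successor map. By the discussion following Definition~\ref{def:successor}, the edges \(x\to s(x)\) form a single directed path through all vertices, from \(x_{\min}=(+,\varnothing)\) to \(x_{\max}=(-,\varnothing)\). By Proposition~\ref{prop:succ_increases_valuation}, \(\nu\) strictly increases along this path. Hence \(\nu\) is injective on vertices, and the order in which the path visits vertices is exactly the order by \(\nu\).

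So, given \(\nu(x)\le\nu(y)\), the vertex \(y\) appears at or after \(x\) on the successor path, and \(y=s^k(x)\) for some \(k\ge 0\). By Proposition~\ref{prop:typeB-auxiliary-successor}, each step \(s^j(x)\to s^{j+1}(x)\) is an edge of \(\widehat R(\pi)\). The successor chain is therefore a directed path \(x\rightsquigarrow y\) in \(\widehat R(\pi)\), which gives \(x\le_{\widehat{\mathsf{Reeb}}(\pi)}y\).

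The only subtle point is that the successor path genuinely visits every vertex in \(\nu\)-order, without relying on \(\nu\) taking consecutive integer values; unlike type \(A\), we do not claim \(\nu(s(x))=\nu(x)+1\). I would justify this from the paper's argument after Definition~\ref{def:successor}. Every vertex other than \(x_{\min}\) has a unique predecessor, and every vertex other than \(x_{\max}\) has a unique successor. Acyclicity comes from the monotonicity of \(\nu\). Together these give a single Hamiltonian path. Along a Hamiltonian path on which \(\nu\) strictly increases, \(\nu\) is injective, and "later on the path" coincides with "larger \(\nu\)". This is exactly what makes the reachability order agree with the \(\nu\)-order.
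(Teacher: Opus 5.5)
Your proposal is correct and follows the paper's proof essentially step for step. The forward direction uses Lemma~\ref{lem:typeB-Lemma2.2}. The converse walks along the Hamiltonian successor path, on which \(\nu\) strictly increases, with Proposition~\ref{prop:typeB-auxiliary-successor} placing each successor edge in \(\widehat R(\pi)\); your added remark that injectivity of \(\nu\) along that path, rather than consecutive values, makes path order coincide with \(\nu\)-order is a slightly more explicit version of the paper's ``thus \(x\) reaches \(y\) along successor edges.''
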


\begin{proof}
If \(x\rightsquigarrow y\) is a directed path in \(\widehat R(\pi)\), then Lemma~\ref{lem:typeB-Lemma2.2} gives \(\nu(x)\le \nu(y)\).

Conversely, assume \(\nu(x)\le \nu(y)\). The successor edges form a directed path visiting all vertices exactly once, and, by Proposition~\ref{prop:succ_increases_valuation}, \(\nu\) strictly increases along this path. Thus \(x\) reaches \(y\) along successor edges. Each successor edge lies in \(\widehat R(\pi)\) by Proposition~\ref{prop:typeB-auxiliary-successor}, so \(x\le_{\widehat{\mathsf{Reeb}}(\pi)} y\).
\end{proof}

The total order \(\widehat{\mathsf{Reeb}}(\pi)\) has \(2\cdot 3^{n-1}\) elements, while the valuation \(\nu
\) takes the value \(2^{2n-1}-1\) at the maximal vertex \((-,\varnothing)\). Hence, unlike in type \(A\), the valuation \(\nu\) is not the minimal \(\mathbb Z_{\ge 0}\)-valued height function on \(\widehat{\mathsf{Reeb}}(\pi)\). Thus the cubic coordinates constructed by taking height \(\nu\) at each step  are not the minimal \(\mathbb Z_{\ge 0}\)-coordinates compatible with the tower of deletion projections.

It would be interesting to describe the minimal heights combinatorially. It already follows from the case \(n=3\) that they cannot be given by a weighted sum of the \(n\)-inversions.

Indeed, suppose that for \(n=3\) the minimal height is of the form
\[
h(x)=\sum_{\alpha\in\Inv_3(x)} w(\alpha),
\]
where \(\Inv_3(x)\) is the set of \(3\)-inversions of \(x\), and \(w\) assigns a weight to each of
\[
(3,2),\ (3,1),\ (-3),\ (3,-1),\ (3,-2).
\]

Number the elements of \(\widehat{\mathsf{Reeb}}(\pi)\) from \(0\) to \(17\) along the total order. 

\begin{center}
\begin{tabular}{c|c|l}
line & $x=(\varepsilon,A)$ & $\operatorname{Inv}_3(x)$ \\
\hline
0  & $(+,\varnothing)$            & $\varnothing$ \\
1  & $(+,\{-2\})$                 & $\{(3,-2)\}$ \\
2  & $(+,\{-1\})$                 & $\{(3,-1)\}$ \\
3  & $(+,\{-1,-2\})$              & $\{(3,-1),(3,-2)\}$ \\
4  & $(-,\{-1,-2\})$              & $\{(-3),(3,-1),(3,-2)\}$ \\
5  & $(+,\{+1\})$                 & $\{(3,1)\}$ \\
6  & $(+,\{+1,-2\})$              & $\{(3,1),(3,-2)\}$ \\
7  & $(-,\{+1,-2\})$              & $\{(-3),(3,1),(3,-2)\}$ \\
8  & $(-,\{-2\})$                 & $\{(-3),(3,1),(3,-1),(3,-2)\}$ \\
9  & $(+,\{+2\})$                 & $\{(3,2)\}$ \\
10 & $(+,\{-1,+2\})$              & $\{(3,2),(3,-1)\}$ \\
11 & $(-,\{-1,+2\})$              & $\{(3,2),(-3),(3,-1)\}$ \\
12 & $(-,\{-1\})$                 & $\{(3,2),(-3),(3,-1),(3,-2)\}$ \\
13 & $(+,\{+1,+2\})$              & $\{(3,2),(3,1)\}$ \\
14 & $(-,\{+1,+2\})$              & $\{(3,2),(3,1),(-3)\}$ \\
15 & $(-,\{+1\})$                 & $\{(3,2),(3,1),(-3),(3,-2)\}$ \\
16 & $(-,\{+2\})$                 & $\{(3,2),(3,1),(-3),(3,-1)\}$ \\
17 & $(-,\varnothing)$            & $\{(3,2),(3,1),(-3),(3,-1),(3,-2)\}$ \\
\end{tabular}
\end{center}

Then line \(1\) is \(\{(3,-2)\}\), so \(w(3,-2)=1\), and line \(2\) is \(\{(3,-1)\}\), so \(w(3,-1)=2\). Further, line \(4\) is \(\{(-3),(3,-1),(3,-2)\}\), so
\[
w(-3)+w(3,-1)+w(3,-2)=4,
\]
hence \(w(-3)=1\). Finally, line \(5\) is \(\{(3,1)\}\), so \(w(3,1)=5\).

These values agree with lines \(3\), \(6\), and \(7\). However, line \(8\) is
\[
\{(-3),(3,1),(3,-1),(3,-2)\},
\]
so the weighted sum would be
\[
w(-3)+w(3,1)+w(3,-1)+w(3,-2)=1+5+2+1=9,
\]
while the minimal height there is \(8\). 

\bibliographystyle{alpha}
\bibliography{biblio}

\end{document}